\documentclass[11pt]{article}
\usepackage[margin=1in]{geometry}
\usepackage{url}
\usepackage{breakurl}
\usepackage{amssymb}
\usepackage{amsmath}
\usepackage{amsthm}
\usepackage{graphicx}
\usepackage[export]{adjustbox}
\usepackage{placeins}
\usepackage[english]{babel}
\newtheorem{theorem}{Theorem}[section]

\newtheorem{lemma}[theorem]{Lemma}

\usepackage[colorlinks=true,urlcolor=blue,citecolor=blue,linkcolor=blue]{hyperref}
\usepackage{xurl}
\usepackage{caption}
\usepackage{booktabs}
\usepackage{capt-of}
\usepackage{xcolor}
\usepackage{microtype}
\usepackage{listings}
\usepackage{algorithm}
\usepackage{algpseudocode}

\definecolor{codegreen}{rgb}{0.0,0.5,0.0}
\definecolor{codegray}{rgb}{0.4,0.4,0.4}
\definecolor{codepurple}{rgb}{0.47,0.0,0.53}
\definecolor{backcolour}{rgb}{0.97,0.97,0.97}

\lstdefinestyle{pythonstyle}{
    language=Python,
    backgroundcolor=\color{backcolour},   
    commentstyle=\color{codegreen}\itshape,
    keywordstyle=\color{blue}\bfseries,
    numberstyle=\tiny\color{codegray},
    stringstyle=\color{codepurple},
    basicstyle=\ttfamily\small,
    breakatwhitespace=false,         
    breaklines=true,                 
    captionpos=b,                    
    keepspaces=true,                 
    numbers=left,                    
    numbersep=8pt,                  
    showspaces=false,                
    showstringspaces=false,
    showtabs=false,                  
    tabsize=4,
    frame=tb,                         % Clean top and bottom rules
    framerule=0.8pt,
    rulecolor=\color{black}
}

\title{A Computational and Conformal Analysis of Multibrot Sets}
\author{
    Rohan Senapati%
    \thanks{\copyright~2026 Rohan Senapati. Licensed under CC BY-NC-ND 4.0}
}
\date{September 2026}

\begin{document}

\maketitle

\begin{abstract}
For the unicritical family \(f_{c,j}(z)=z^j+c\), the Multibrot sets \(M_j\) converge geometrically to the closed unit disk as \(j\to\infty\), but this limit does not quantify their finite-degree area behavior. We estimate \(\operatorname{Area}(M_j)\) using escape-time pixel counting while separating the effects of the iteration threshold \(K\) and spatial sampling density \(\rho\). The finite-iteration sets form a nested decreasing sequence, while grid refinement is treated empirically. Across \(K\in\{25,100,500,1000\}\) and \(\rho\in\{128,256,512,1024\}\), the final grid refinement changes the reported areas by at most \(1.1\times10^{-3}\). At \(K=1000\) and \(\rho=1024\), the estimates are well described by
\[
C(j)\approx \pi-2.49216j^{-0.57103}.
\]
Independently, Laurent coefficients of the exterior conformal map and Gronwall's area formula yield finite-truncation upper bounds
\[
U_{1000}(j)\approx \pi-2.07405j^{-0.643427}.
\]
Across all tested degrees, the grid estimates remain below the corresponding conformal bounds, and both sequences exhibit similar power-law decay of the area deficit from \(\pi\).
\end{abstract}

\section{Introduction}
For an integer $j\ge 2$, consider the unicritical family
\[
f_{c,j}(z)=z^j+c,
\qquad c\in\mathbb C,
\]
and its connectedness locus
\[
M_j=\{c\in\mathbb C:\text{the orbit of }0\text{ under }f_{c,j}\text{ is bounded}\}.
\]
The case $j=2$ is the classical Mandelbrot set. For larger $j$, the sets
retain the basic structure of unicritical parameter spaces while becoming
increasingly disk-like. Boyd and Schulz proved that $M_j$ converges to the
closed unit disk in the Hausdorff metric as $j\to\infty$
\cite{boydschulz2011}. This gives a precise geometric limiting picture, but it
does not provide a finite-degree quantitative law for the area estimates
obtained in computation.

The goal of this paper is to study that finite-degree area evolution with a
pipeline in which each stage addresses a different source of uncertainty.
First, we compute escape-time pixel-counting estimates of $\operatorname{Area}(M_j)$.
Second, we separate the two numerical approximation parameters: the iteration
threshold $K$ and grid density $\rho$. The $K$-dependence admits a clean
measure-theoretic interpretation because the exact finite-iteration sets are
nested, whereas the $\rho$-dependence is a discretization problem and is
therefore treated empirically. Third, we compare the resulting estimates with
rigorous conformal upper bounds obtained from Laurent coefficients and
Gronwall's area formula.

This separation is important. Increasing $K$ and increasing $\rho$ do not
represent the same limiting process, and numerical stabilization under either
parameter is not, by itself, a proof that pixel counting has converged to the
exact Lebesgue area. In particular, the irregular boundary prevents a naive
identification of grid refinement with a rigorous error bound. The numerical
study below therefore reports sensitivity and stabilization rather than a
claimed discretization theorem.

Using the finest tested computation, $K=1000$ and $\rho=1024$, we obtain a
degree-dependent empirical fit
\[
C(j)=\pi-\frac{2.49216}{j^{0.57103}},
\]
with $R^2=0.996$ over the fitted degrees. The purpose of this model is
descriptive: it quantifies the observed decay of the area deficit
$\pi-C(j)$ over the computed range. We then place these estimates beside
finite-truncation Gronwall bounds derived independently from the exterior
conformal map.

The main contributions are therefore:
\begin{enumerate}
    \item \textbf{A degree-dependent numerical area study.} Escape-time
    pixel counting is used to estimate $\operatorname{Area}(M_j)$ across
    polynomial degree and to quantify the observed approach toward $\pi$.

    \item \textbf{A two-parameter refinement analysis.} The iteration limit
    $K$ and spatial density $\rho$ are varied independently. The exact
    finite-iteration convergence is justified measure-theoretically, while
    the grid study is reported as empirical discretization stability.

    \item \textbf{An empirical area-deficit law.} The finest-grid estimates
    are summarized by a power-law model for $\pi-C(j)$, providing a compact
    quantitative description of the finite-degree trend.

    \item \textbf{An independent conformal comparison.} A Laurent-coefficient
    computation for arbitrary Multibrot degree is checked against known
    coefficient structure and combined with Gronwall's area formula to obtain
    rigorous upper bounds for comparison with the grid estimates.
\end{enumerate}

Reflectional and rotational symmetries are classical structural features of
Multibrot sets. Because they are useful pedagogically but are not the central
quantitative contribution of this paper, explicit derivations and associated
geometric visualizations are collected in Appendix~\ref{app:symmetry}.

The remainder of the paper follows the computational pipeline directly.
Section~\ref{sec:related} summarizes the relevant literature.
Section~\ref{sec:preliminaries} fixes notation and the escape criterion.
Section~\ref{sec:area} presents the area estimates and two-parameter refinement
study. Section~\ref{sec:scaling} gives the empirical degree-area model.
Section~\ref{sec:laurent} develops the conformal upper bounds and compares them
with the grid estimates.

\section{Related Work}\label{sec:related}

The Mandelbrot set and its higher-degree analogues arise naturally in the
study of unicritical polynomial dynamics. Foundational work of Douady and
Hubbard established the modern framework for connectedness loci and
polynomial-like mappings
\cite{douady1985,douadyhubbard1984,douadyhubbard1985};
subsequent work developed the structure of these parameter spaces in much
greater depth \cite{milnor2000,schleicher1999,cheraghi2007}.

For the large-degree regime, Boyd and Schulz proved Hausdorff convergence of
Multibrot sets to the closed unit disk and of the associated Julia sets to the
unit circle \cite{boydschulz2011}. The present work does not reprove that
geometric limit. Instead, it asks how finite-degree numerical area estimates
behave and how that behavior can be assessed through independent numerical
and conformal calculations.

Area is already subtle in the quadratic case. Ewing and Schober used
coefficients of the exterior uniformizing map together with Gronwall's area
theorem to obtain rigorous computational bounds for the Mandelbrot set
\cite{ewingschober1992}; later work refined numerical and analytic approaches
to the same problem \cite{bittner2017,siudem2023}. Direct lattice methods are
sensitive to the boundary, and Andreadis and Karakasidis later studied this
issue for the classical Mandelbrot set using a boundary-scanning approach that
distinguishes interior, exterior, and boundary lattice points
\cite{andreadis2015numerical}.

More directly related to the present study, Andreadis and Karakasidis
developed a finite-escape, lattice-based framework for numerically
approximating the areas of generalized Mandelbrot sets and investigated their
behavior under increasing polynomial degree, lattice resolution, and
iteration count \cite{andreadis2013generalized}. The present work likewise
uses direct pixel counting, but distinguishes the mathematical status of the
two approximation parameters: finite-iteration convergence is treated through
the nested sets $M_{j,K}$, whereas spatial grid refinement is interpreted only
as empirical discretization stability. Its principal additional component is
an independent conformal analysis: generalized Laurent coefficients are
combined with Gronwall's area formula to obtain upper bounds across degree,
which are then compared directly with the grid-based area estimates and their
observed degree-dependent scaling.

The conformal component builds on the exterior-map framework of Jungreis
\cite{jungreis1985} and on Shimauchi's generalization of Laurent-coefficient
structure to arbitrary Multibrot sets \cite{shimauchi2012exterior}. We use
these coefficients not as a primary object of arithmetic study, but as inputs
to Gronwall upper bounds that can be compared with the escape-time area
estimates.

Although the reflectional and rotational symmetries of Multibrot sets are
classical, Appendix~\ref{app:symmetry} develops a self-contained,
lemma-based framework for proving these invariances in representative cases.
The presentation separates modulus preservation, compatibility with the
polynomial exponentiation step, and propagation of the symmetry through
iteration, and supplements the algebraic arguments with surface
visualizations illustrating the corresponding polynomial identities. This
material is included for its pedagogical and structural value rather than as
a claim of a new symmetry theorem.

\section{Preliminaries and Numerical Setup}\label{sec:preliminaries}
For $j\ge2$, define
\[
z_0(c)=0,\qquad z_{n+1}(c)=z_n(c)^j+c,
\]
and
\[
M_j=\{c\in\mathbb C:(z_n(c))_{n\ge0}\text{ is bounded}\}.
\]
The escape-time computation uses the standard bailout radius $2$.

\begin{lemma}[Escape criterion]\label{lem:escape}
For every $j\ge2$, $M_j\subseteq\{c:|c|\le2\}$. Moreover, if
$|z_n(c)|>2$ for some $n$, then $|z_k(c)|\to\infty$ as $k\to\infty$.
\end{lemma}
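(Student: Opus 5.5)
The plan is to prove the second statement first, as a growth estimate, and then deduce the containment $M_j\subseteq\{|c|\le2\}$ from it.

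\textbf{Step 1 (growth when $|z|>\max(|c|,2)$).} Suppose $|z_n|>2$ and $|z_n|\ge|c|$. Using $j\ge2$, the reverse triangle inequality gives
\[
|z_{n+1}|\ge |z_n|^j-|c|\ge |z_n|^j-|z_n| = |z_n|\bigl(|z_n|^{j-1}-1\bigr).
\]
Write $|z_n|=2+\varepsilon$ with $\varepsilon>0$. Then $|z_n|^{j-1}-1\ge 1+\varepsilon$, so $|z_{n+1}|\ge(1+\varepsilon)|z_n|$. In particular $|z_{n+1}|>|z_n|\ge|c|$, so the hypothesis propagates. By induction $|z_{n+k}|\ge(1+\varepsilon)^k|z_n|\to\infty$.

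\textbf{Step 2 (the case $|c|>2$).} Here $z_1=c$, so $|z_1|=|c|>2$ and $|z_1|\ge|c|$. Step 1 applies from $n=1$, the orbit is unbounded, and hence $c\notin M_j$. This proves $M_j\subseteq\{|c|\le2\}$.

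\textbf{Step 3 (general escape).} Assume $|z_n(c)|>2$ for some $n$. If $|c|>2$, Step 2 already gives divergence. Otherwise $|c|\le2<|z_n|$, and Step 1 applies directly at index $n$, so $|z_k|\to\infty$.

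The only subtle point is the case split on $|c|$. A hypothesis of $|z_n|>2$ alone does not give $|z_n|\ge|c|$ without first handling large $|c|$. Step 2 removes this issue, because for $|c|>2$ the orbit already satisfies the hypothesis of Step 1 at $n=1$. The inequality $|z|^{j-1}\ge|z|$ for $|z|>1$ is what makes the bound uniform in $j\ge2$.
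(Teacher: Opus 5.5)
Your proof is correct and follows the same route as the paper: the reverse triangle inequality $|z_{n+1}|\ge|z_n|^j-|c|$ forces geometric growth once the orbit leaves the radius-$2$ disk, with a case split between $|c|\le 2$ and $|c|>2$ (where $z_1=c$ already escapes). Your propagating hypothesis $|z_n|\ge|c|$ is a bit more careful than the paper's argument. The paper's estimate $|z_n|^j-|c|\ge|z_n|^2-2$ uses $|c|\le2$, so for $|c|>2$ it only asserts that ``the same conclusion follows,'' whereas you prove that case and give the explicit geometric rate $(1+\varepsilon)^k$.
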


\begin{proof}
If $|c|\le2$ and $|z_n|>2$, then
\[
|z_{n+1}|\ge |z_n|^j-|c|\ge |z_n|^2-2>|z_n|.
\]
Once outside the radius-$2$ disk, the moduli therefore increase and in fact
grow at least geometrically. If $|c|>2$, then $z_1=c$ already lies outside
the bailout disk and the same conclusion follows. Hence parameters with
$|c|>2$ are not in $M_j$.
\end{proof}

\subsection{Escape-time area estimator}\label{sec:computation}
The parameter plane is sampled on $[-2,2]\times[-2,2]$. At every grid point
$c$, the critical orbit is iterated until either $|z_n|>2$ or the prescribed
iteration threshold $K$ is reached. Points that have not escaped by time $K$
are counted as members of the finite-iteration approximation. If the sampling
density is $\rho$ points per unit length, each grid point represents area
$\rho^{-2}$, so the pixel-counting estimate is the number of non-escaping grid
points multiplied by $\rho^{-2}$.

All experiments use the same deterministic Cartesian grid and a
Numba-accelerated implementation with parallel execution. The full reference
implementation is included in Appendix~\ref{app:implementation}; the main text
focuses on the numerical quantities that enter the analysis.

\section{Area Estimates and Two-Parameter Refinement}\label{sec:area}

For each degree $j\in\{2,3,\ldots,20\}$, we compute an escape-time area
estimate on the finest tested grid using $K=1000$ and $\rho=1024$. These
values, reported in Table~\ref{tab:j_vs_area}, form the data set used for the
degree-dependent fit in Section~\ref{sec:scaling}. The remainder of this
section examines how the reported values change when $K$ and $\rho$ are varied
independently.

\begin{table}[htbp]
\centering
\caption{Area approximations for generalized Mandelbrot sets of different degrees $j$, computed using the escape-time algorithm.}
\label{tab:j_vs_area}
\begin{tabular}{cc}
\toprule
Degree $j$ & Area \\
\midrule
2   & 1.5108 \\
3   & 1.7981 \\
4   & 1.9843 \\
5   & 2.1181 \\
6   & 2.2191 \\
7   & 2.2998 \\
8   & 2.3653 \\
9   & 2.4203 \\
10  & 2.4660 \\
11  & 2.5059 \\
12  & 2.5410 \\
13  & 2.5717 \\
14  & 2.5990 \\
15  & 2.6234 \\
16  & 2.6455 \\
17  & 2.6660 \\
18  & 2.6838 \\
19  & 2.7008 \\
20  & 2.7162 \\
\bottomrule
\end{tabular}
\end{table}

\subsection{Numerical Convergence and Error Analysis}
\label{sec:convergence}

The numerical estimation of the Lebesgue measure
\[
\operatorname{Area}(M_j)
=
\iint_{\mathbb C}\chi_{M_j}(c)\,dA
\]
is affected by two distinct approximation parameters. The first is the
finite iteration limit used in the escape-time algorithm. The second is
the finite Cartesian grid used to sample the parameter plane. These two
effects are investigated separately below by varying one parameter while
holding the other fixed.

\subsubsection{Finite-Iteration Approximation}

For a fixed polynomial degree $j\geq 2$, define the finite-iteration set
\[
M_{j,K}
=
\left\{
c\in\mathbb C:
|z_n(c)|\leq 2
\text{ for every }
1\leq n\leq K
\right\},
\]
where
\[
z_0(c)=0,
\qquad
z_{n+1}(c)=z_n(c)^j+c.
\]

If a parameter remains within the escape radius for $K+1$ iterations,
then it necessarily remains within the escape radius for the first $K$
iterations. Therefore,
\[
M_{j,K+1}\subseteq M_{j,K}.
\]

Hence, the finite-iteration sets form a nested decreasing sequence, and
their exact areas satisfy
\[
\operatorname{Area}(M_{j,K+1})
\leq
\operatorname{Area}(M_{j,K}).
\]

Formally, the bounded-orbit Multibrot set can be written as
\[
M_j
=
\bigcap_{K=1}^{\infty}M_{j,K}.
\]

If the exact areas of the sets $M_{j,K}$ were available, continuity of
Lebesgue measure from above would imply
\[
\lim_{K\to\infty}
\operatorname{Area}(M_{j,K})
=
\operatorname{Area}(M_j).
\]

The computations below, however, do not evaluate
$\operatorname{Area}(M_{j,K})$ exactly. They approximate each
finite-iteration set using a discrete grid. Consequently, the numerical
data should be interpreted as empirical approximations to the
finite-iteration areas rather than as a proof that the complete
pixel-counting procedure converges to the true Multibrot area.

\subsubsection{Grid Convention and Finite Precision}

All numerical experiments reported in this section were generated using
a single Numba-accelerated implementation with parallel execution and
fast-math optimization. The parameter plane was sampled over
$[-2,2]\times[-2,2]$ using a deterministic Cartesian grid. For a sampling
density $\rho$, the grid contains $(4\rho)\times(4\rho)$ points, with
coordinates
\[
c_{m,n}
=
\left(-2+\frac{m}{\rho}\right)
+
i\left(-2+\frac{n}{\rho}\right),
\qquad
0\leq m,n<4\rho.
\]
Thus, the lower and left boundaries of the square are included, while
the upper and right boundaries are excluded. An orbit was terminated
immediately once its squared modulus exceeded $4$. The estimated area
was computed as the number of parameters not escaping within $K$
iterations multiplied by the area $\rho^{-2}$ represented by each grid
point.

Because fast-math optimization permits non-strict floating-point
rearrangements, parameters extremely close to the computed boundary may
be sensitive to machine-level evaluation. The results should therefore
be regarded as empirical finite-precision estimates.

\subsubsection{Empirical Convergence with Respect to the Iteration Limit}

To investigate the dependence of the computed areas on the iteration
threshold, the escape-time algorithm was evaluated for
\[
K\in\{25,100,500,1000\},
\]
while holding the spatial sampling density fixed at 512 samples per unit
length.

\vspace{1em}
\begin{center}
\captionof{table}{Empirical area estimates obtained at fixed spatial sampling density while varying the iteration limit.}
\label{tab:iteration_convergence}
\begin{tabular}{ccccc}
\toprule
Degree $j$ & $K=25$ & $K=100$ & $K=500$ & $K=1000$ \\
\midrule
2  & 1.6809 & 1.5474 & 1.5155 & 1.5113 \\
3  & 1.8928 & 1.8182 & 1.8000 & 1.7980 \\
4  & 2.0524 & 1.9997 & 1.9860 & 1.9843 \\
5  & 2.1701 & 2.1305 & 2.1194 & 2.1180 \\
6  & 2.2632 & 2.2292 & 2.2198 & 2.2187 \\
7  & 2.3371 & 2.3086 & 2.3007 & 2.3000 \\
8  & 2.3976 & 2.3723 & 2.3657 & 2.3649 \\
9  & 2.4492 & 2.4272 & 2.4212 & 2.4202 \\
10 & 2.4927 & 2.4724 & 2.4667 & 2.4661 \\
11 & 2.5300 & 2.5119 & 2.5069 & 2.5063 \\
12 & 2.5635 & 2.5469 & 2.5419 & 2.5413 \\
13 & 2.5927 & 2.5781 & 2.5733 & 2.5728 \\
14 & 2.6177 & 2.6037 & 2.5994 & 2.5988 \\
15 & 2.6410 & 2.6276 & 2.6239 & 2.6235 \\
16 & 2.6619 & 2.6499 & 2.6460 & 2.6456 \\
17 & 2.6824 & 2.6700 & 2.6660 & 2.6654 \\
18 & 2.6992 & 2.6878 & 2.6843 & 2.6839 \\
19 & 2.7145 & 2.7043 & 2.7012 & 2.7008 \\
20 & 2.7293 & 2.7195 & 2.7164 & 2.7160 \\
\bottomrule
\end{tabular}
\end{center}
\vspace{1em}

The estimates are monotonically decreasing across the tested
iteration limits, consistent with the nesting relation
\[
M_{j,K+1}\subseteq M_{j,K}.
\]

The largest corrections generally occur between $K=25$ and $K=100$.
For example, for the degree-$10$ Multibrot set,
\[
2.4927
\rightarrow
2.4724
\rightarrow
2.4667
\rightarrow
2.4661,
\]
with successive differences
\[
0.0203,\qquad
0.0057,\qquad
0.0006.
\]

A similar decrease in iteration sensitivity is observed throughout the
tested range of degrees.

\vspace{1em}
\begin{center}
\captionof{table}{Absolute difference between the $K=500$ and $K=1000$ area estimates.}
\label{tab:iteration_difference}
\begin{tabular}{cc}
\toprule
Degree $j$
&
$\left|A_{j,1000}-A_{j,500}\right|$
\\
\midrule
2  & 0.0042 \\
3  & 0.0020 \\
4  & 0.0017 \\
5  & 0.0014 \\
6  & 0.0011 \\
7  & 0.0007 \\
8  & 0.0009 \\
9  & 0.0010 \\
10 & 0.0007 \\
11 & 0.0005 \\
12 & 0.0006 \\
13 & 0.0005 \\
14 & 0.0006 \\
15 & 0.0003 \\
16 & 0.0005 \\
17 & 0.0006 \\
18 & 0.0005 \\
19 & 0.0004 \\
20 & 0.0005 \\
\bottomrule
\end{tabular}
\end{center}
\vspace{1em}

Across all reported degrees, the difference between the $K=500$ and
$K=1000$ estimates does not exceed approximately
\[
4.2\times10^{-3}
\]
(and remains below $1.0\times10^{-3}$ for degrees $j\geq 7$).

These results indicate decreasing sensitivity to the iteration threshold
over the tested range. This stabilization does not provide a rigorous
error bound relative to the true area. A small difference between two
finite-iteration calculations only shows that the two numerical
approximations are close to one another.

\subsubsection{Grid-Refinement Study}
\label{sec:grid_refinement}

To study the effect of spatial resolution separately from the iteration
limit, the iteration threshold was fixed at
\[
K=1000,
\]
and the sampling density was varied over
\[
\rho\in\{128,256,512,1024\}
\]
samples per unit length.

Because the computations were performed over the square
\[
[-2,2]\times[-2,2],
\]
these densities correspond respectively to the grids
\[
512\times512,\qquad
1024\times1024,\qquad
2048\times2048,\qquad
4096\times4096.
\]

\begin{table}[htbp]
\centering
\caption{Grid-refinement study at fixed iteration limit $K=1000$.
The sampling density is measured in grid points per unit length.}
\label{tab:grid_refinement}
\begin{tabular}{ccccc}
\toprule
Degree $j$
&
$\rho=128$
&
$\rho=256$
&
$\rho=512$
&
$\rho=1024$
\\
\midrule
2  & 1.515686 & 1.512619 & 1.511349 & 1.510802 \\
3  & 1.797668 & 1.797440 & 1.798008 & 1.798062 \\
4  & 1.984985 & 1.984207 & 1.984333 & 1.984312 \\
5  & 2.117249 & 2.118057 & 2.117985 & 2.118096 \\
6  & 2.217529 & 2.217438 & 2.218693 & 2.219094 \\
7  & 2.299866 & 2.300064 & 2.300007 & 2.299790 \\
8  & 2.363953 & 2.364517 & 2.364861 & 2.365273 \\
9  & 2.418030 & 2.419571 & 2.420200 & 2.420255 \\
10 & 2.464111 & 2.464981 & 2.466053 & 2.466035 \\
11 & 2.506165 & 2.507004 & 2.506344 & 2.505893 \\
12 & 2.544678 & 2.541718 & 2.541306 & 2.540999 \\
13 & 2.564514 & 2.570389 & 2.572788 & 2.571725 \\
14 & 2.597961 & 2.599014 & 2.598835 & 2.598992 \\
15 & 2.621155 & 2.624588 & 2.623539 & 2.623408 \\
16 & 2.644043 & 2.646240 & 2.645580 & 2.645460 \\
17 & 2.662659 & 2.664932 & 2.665440 & 2.666024 \\
18 & 2.683655 & 2.683533 & 2.683880 & 2.683821 \\
19 & 2.699646 & 2.700211 & 2.700787 & 2.700835 \\
20 & 2.714478 & 2.715378 & 2.715992 & 2.716249 \\
\bottomrule
\end{tabular}
\end{table}

The estimates exhibit relatively small changes at the finest spatial
refinements considered, although the successive corrections are neither
monotone nor uniformly decreasing across all degrees. For example, for $j=10$,
\[
2.464111
\rightarrow
2.464981
\rightarrow
2.466053
\rightarrow
2.466035,
\]
with successive absolute changes
\[
0.000870,\qquad
0.001072,\qquad
0.000018.
\]

The refinement behavior is not strictly monotone for many of the tested
degrees. This is not inconsistent with the numerical method, because
successive Cartesian grids may intersect thin boundary structures
differently. Unlike the finite-iteration sets, the sampled sets produced
at different grid densities do not form a nested sequence. Grid
refinement therefore need not produce a one-sided sequence of area
estimates.

To summarize the final refinement step,
Table~\ref{tab:grid_difference} reports the absolute difference between
the densities $\rho=512$ and $\rho=1024$.

\begin{table}[htbp]
\centering
\caption{Absolute difference between the two finest grid resolutions at
fixed iteration limit $K=1000$.}
\label{tab:grid_difference}
\begin{tabular}{cc}
\toprule
Degree $j$
&
$\left|A_{j,1024}-A_{j,512}\right|$
\\
\midrule
2  & 0.000547 \\
3  & 0.000054 \\
4  & 0.000021 \\
5  & 0.000112 \\
6  & 0.000401 \\
7  & 0.000216 \\
8  & 0.000412 \\
9  & 0.000054 \\
10 & 0.000018 \\
11 & 0.000451 \\
12 & 0.000307 \\
13 & 0.001063 \\
14 & 0.000157 \\
15 & 0.000131 \\
16 & 0.000120 \\
17 & 0.000585 \\
18 & 0.000059 \\
19 & 0.000049 \\
20 & 0.000257 \\
\bottomrule
\end{tabular}
\end{table}

Across the tested degrees, the final grid-refinement correction ranges
from approximately
\[
1.8\times10^{-5}
\]
to
\[
1.06\times10^{-3}.
\]

Thus, at $K=1000$, increasing the grid from
$2048\times2048$ to $4096\times4096$ changes the reported area estimates
by no more than approximately $1.1\times10^{-3}$ over the tested degree
range.

This provides evidence that the computations become less sensitive to
spatial resolution at the finest grids considered. It does not establish
a rigorous discretization error bound or prove convergence of the
pixel-counting estimates to the true Multibrot area.

\subsubsection{Boundary Measure and Interpretation of the Estimates}

The boundary of a Multibrot set is highly irregular, and a finite
Cartesian grid may sample fine boundary structures differently as the
resolution changes. Shishikura~\cite{shishikura1998} proved that the
boundary of the classical Mandelbrot set has Hausdorff dimension two.
As noted by Bittner et al.~\cite{bittner2014}, this leaves open the
possibility that the boundary may possess positive two-dimensional
Lebesgue measure. Consequently, it is not presently justified to assume
that direct pixel-counting estimates necessarily converge to the true
area merely by increasing the lattice resolution.

Andreadis and Karakasidis~\cite{andreadis2015numerical} considered this boundary
sensitivity through a boundary-scanning method that distinguishes
interior, exterior, and boundary lattice points and applies Pick's
theorem to construct an alternative numerical area approximation. They
also studied the dependence of the projected Mandelbrot set on both the
lattice resolution and the maximum iteration count. The present work
does not implement their boundary-scanning procedure; instead, it retains
direct grid counting and empirically studies the sensitivity of the
computed estimates to both numerical parameters.

Accordingly, the iteration-limit and grid-refinement experiments support
a limited conclusion: over the parameter ranges tested, the area
estimates exhibit numerical stabilization with respect to both the
iteration threshold and the grid resolution. They do not establish that
the observed limiting values equal the exact Lebesgue areas of the
corresponding Multibrot sets and should instead be interpreted as
empirical finite-resolution, finite-precision estimates.

The numerical estimates are complemented in
Section~\ref{sec:laurent} by analytic upper bounds obtained from
Gronwall's Area Theorem. These bounds provide an independent theoretical
comparison, but they do not by themselves determine the numerical error
of the grid-based estimates.

\section{Degree--Area Scaling}\label{sec:scaling}
The finest tested estimates use $K=1000$ and $\rho=1024$. Across the computed
degrees, these values increase toward $\pi$, consistent with the known
large-degree geometric limit. Figure~\ref{fig:area_convergence} shows the
resulting trend.

\begin{figure}[htbp]
    \centering
    \includegraphics[width=0.92\textwidth]{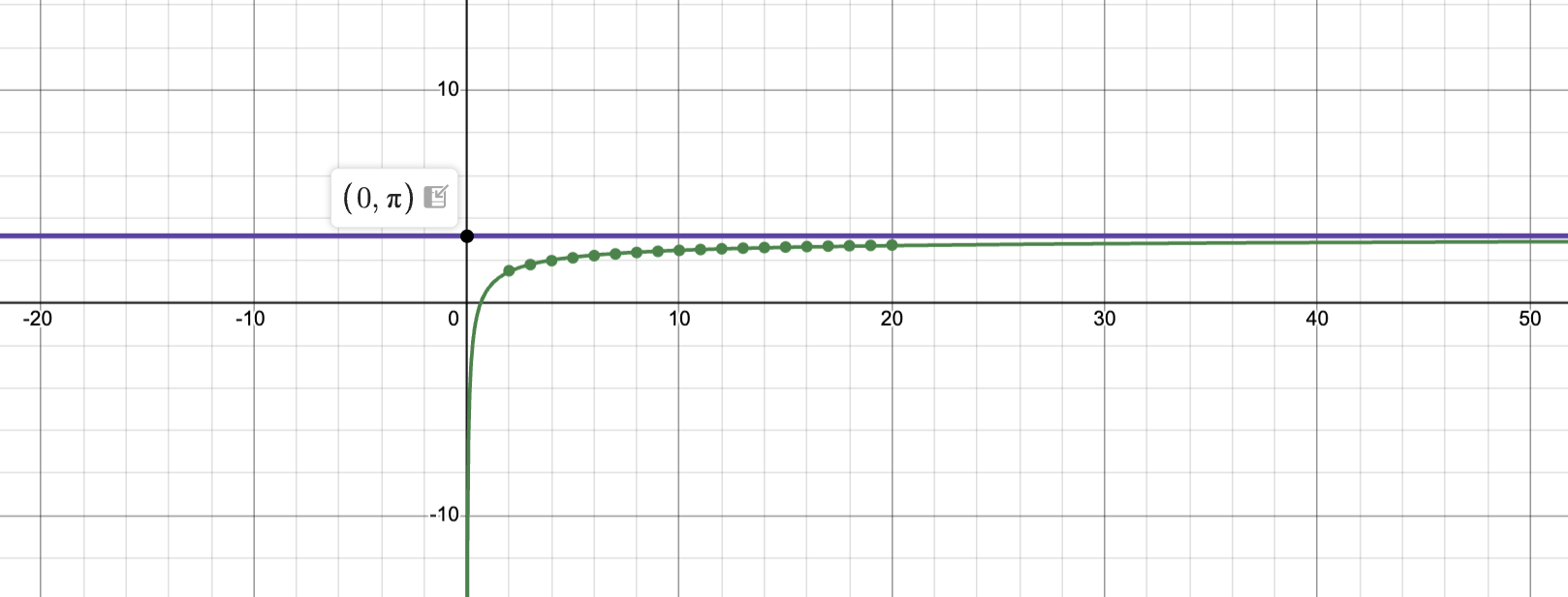}
    \caption{Grid-based area estimates as a function of polynomial degree.
    The curve is descriptive rather than a proof of convergence of the
    pixel-counting scheme to exact Lebesgue area.}
    \label{fig:area_convergence}
\end{figure}

To summarize the finite-degree dependence, we fit
\[
C(j)=\pi-\beta j^{-\alpha}.
\]
For the reported data, the fitted model is
\begin{equation}\label{eq:empirical_fit}
C(j)=\pi-\frac{2.49216}{j^{0.57103}},
\end{equation}
with $R^2=0.996$. Equivalently, the computed area deficit
$\Delta(j)=\pi-C(j)$ is described over the fitted range by a power law
$\Delta(j)\approx\beta j^{-\alpha}$.

Equation~\eqref{eq:empirical_fit} is an empirical summary of the computed
values, not a theorem about the exact areas of $M_j$. Its role is to provide a
quantitative target that can be compared with the independent conformal upper
bounds in Section~\ref{sec:laurent}. Additional high-degree renderings and
geometric observations are collected in Appendix~\ref{app:visuals}.

\section{Conformal Upper Bounds}\label{sec:laurent}
The grid-refinement study measures numerical sensitivity but does not provide
a rigorous discretization error bound. We therefore compare the escape-time
estimates with an independent analytic construction based on the exterior
Riemann map and Gronwall's area theorem. The resulting quantities are rigorous
upper bounds; they should be interpreted as an external consistency check,
not as a proof that the grid estimates have a known absolute error.

\subsection{Exterior map and Laurent coefficients}
For each degree $j\ge2$, let
\[
\Phi_j:\widehat{\mathbb C}\setminus M_j\longrightarrow\{w:|w|>1\}
\]
be the normalized exterior conformal map, with
\[
\frac{\Phi_j(c)}{c}\to1\qquad(c\to\infty),
\]
and write $\Psi_j=\Phi_j^{-1}$. Near infinity,
\begin{equation}\label{eq:laurent_series}
\Psi_j(w)=w+b_{j,0}+\sum_{m=1}^{\infty}\frac{b_{j,m}}{w^m}.
\end{equation}
The coefficients $b_{j,m}$ encode geometric information about the boundary
and are the inputs to Gronwall's formula.

We generalized a quadratic Laurent-coefficient implementation to arbitrary
Multibrot degree and checked the output against known coefficient structure.
For $j\ge3$,
\begin{equation}\label{eq:multibrot_zero_coefficients}
b_{j,m}=0\qquad\text{whenever}\qquad (j-1)\nmid(m+1),
\end{equation}
and $b_{j,0}=0$. The leading nonzero coefficient satisfies
\begin{equation}\label{eq:leading_coefficient}
b_{j,j-2}=-\frac1j.
\end{equation}
For example, when $j=4$, the implementation reproduces
$b_{4,2}=-1/4$ and $b_{4,5}=-1/32$ together with the expected sparsity
pattern. These exact checks test the generalized coefficient computation
before the coefficients are used in the area calculation. A longer
coefficient table and boundary reconstructions are provided in
Appendix~\ref{app:conformal_checks}.

\subsection{Gronwall area bounds}
For a normalized exterior map of the form in
Eq.~\eqref{eq:laurent_series}, Gronwall's area formula gives
\begin{equation}\label{eq:gronwall}
\operatorname{Area}(M_j)
=\pi\left(1-\sum_{m=1}^{\infty}m|b_{j,m}|^2\right).
\end{equation}
Truncating after $M$ coefficients yields
\begin{equation}\label{eq:gronwall_truncated}
U_{j,M}
=\pi\left(1-\sum_{m=1}^{M}m|b_{j,m}|^2\right).
\end{equation}
Because the omitted terms are nonnegative,
\begin{equation}\label{eq:gronwall_bound}
\operatorname{Area}(M_j)\le U_{j,M}.
\end{equation}
Thus every finite truncation provides a rigorous upper bound. We evaluate
Eq.~\eqref{eq:gronwall_truncated} with $M=1000$ for
$j\in\{3,4,\ldots,20\}$.

\subsection{Comparison with grid estimates}

Figure~\ref{fig:curve_validation} overlays the $M=1000$ Gronwall upper
bounds with the $K=1000$, $\rho=1024$ escape-time estimates.

\begin{figure}[htbp]
    \centering
    \includegraphics[width=0.86\textwidth]{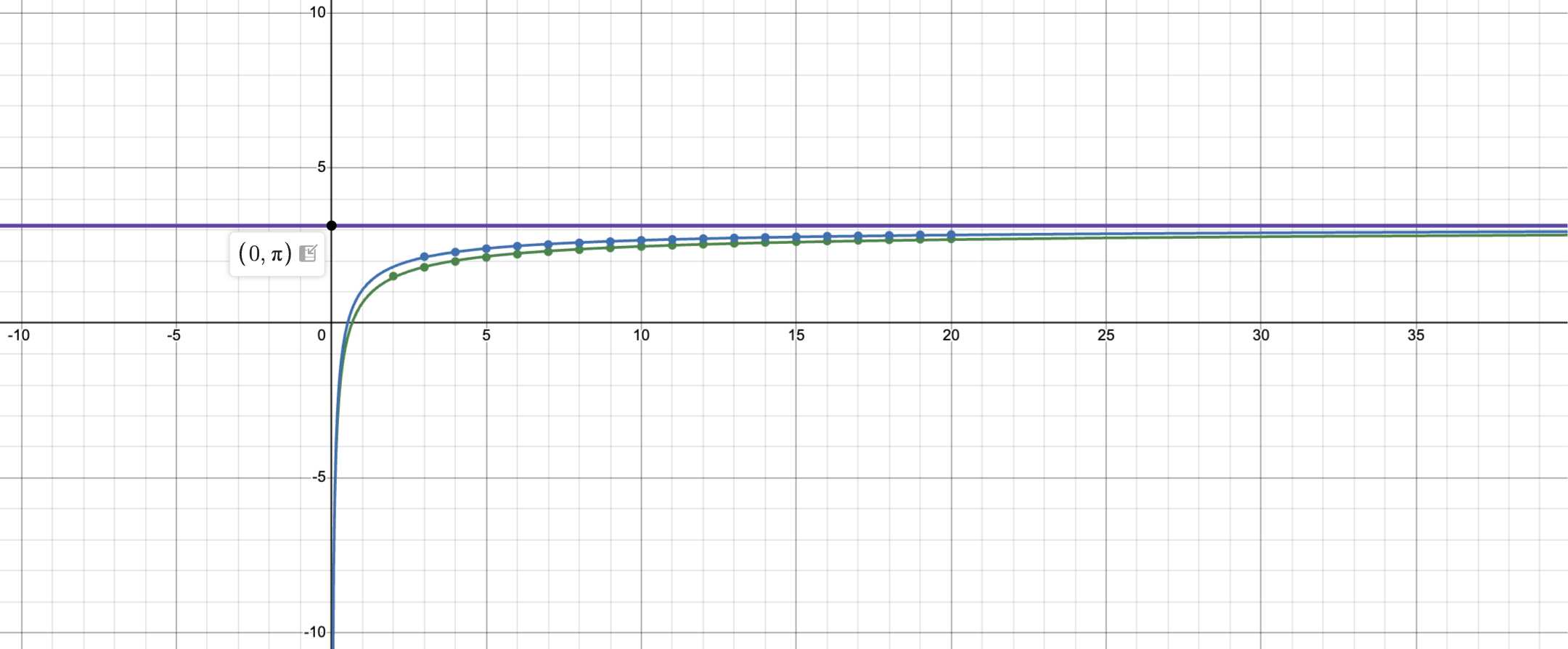}
    \caption{Comparison of grid-based area estimates with $M=1000$ Gronwall
    upper bounds. Across the tested degrees, the numerical estimates remain
    below the independent conformal bounds. Both sequences are well described
    over the computed range by degree-dependent power-law deficits from
    $\pi$, although with different fitted parameters.}
    \label{fig:curve_validation}
\end{figure}

For every tested degree, the grid estimate lies below the corresponding
Gronwall upper bound, as required for consistency with
Eq.~\eqref{eq:gronwall_bound} if the pixel estimate is close to the true
area. The sequence of $M=1000$ Gronwall bounds is itself well described over
the tested range by the empirical fit
\begin{equation}
    U_{1000}(j)
    \approx
    \pi-\frac{2.07405}{j^{0.643427}},
    \label{eq:gronwall_fit}
\end{equation}
with $R^2=0.9976$.

By comparison, the independently obtained grid-based estimates satisfy
\begin{equation}
    C(j)
    \approx
    \pi-\frac{2.49216}{j^{0.57103}},
\end{equation}
with $R^2=0.996$. Thus, although the fitted exponents and coefficients are
not identical, both independently generated sequences are well described
over the computed degree range by the same functional family
\[
    \pi-\beta j^{-\alpha}.
\]

This agreement is notable because the two sequences arise from distinct
constructions: one from finite-grid orbit classification and the other from
Laurent coefficients of the exterior conformal map. The fitted curve in
Eq.~\eqref{eq:gronwall_fit} is an empirical description of the
finite-truncation Gronwall bounds and is not itself asserted to be a rigorous
upper bound.

The comparison should nevertheless be interpreted conservatively. An upper
bound does not determine the error of a lower numerical estimate, and the
observed agreement does not prove that the grid sequence converges to the
exact area. Rather, the Gronwall bounds provide an independent conformal
reference for both the scale and the observed degree dependence of the
escape-time estimates.

\section{Conclusion}
This paper develops a three-stage quantitative study of Multibrot area across
polynomial degree. Escape-time pixel counting supplies the primary numerical
estimates. A two-parameter refinement study then separates the theoretically
controlled finite-iteration approximation from the empirically assessed grid
discretization. Finally, Laurent coefficients and Gronwall's area theorem
supply rigorous conformal upper bounds for an independent comparison.

At the finest tested parameters, the final grid refinement changes the
reported areas by at most approximately $1.1\times10^{-3}$ across the tested
degrees. The resulting values are well summarized by the empirical law
$C(j)=\pi-\beta j^{-\alpha}$ with $R^2=0.996$, while the $M=1000$ conformal
upper bounds exhibit a closely related degree dependence with $R^2=0.9976$.
These fits quantify the observed finite-degree trend but are not claimed as
asymptotic theorems for the exact areas.

The main unresolved question is therefore analytic rather than computational:
whether the observed power-law behavior of the area deficit can be derived
from the geometry or exterior conformal map of $M_j$. Further numerical work
could refine the discretization study or compare alternative area estimators,
while further conformal analysis could investigate whether the coefficient
structure explains the fitted exponent. The symmetry derivations retained in
Appendix~\ref{app:symmetry} provide complementary geometric intuition, but the
central result of the paper is the quantitative connection between numerical
area estimates, refinement behavior, and conformal upper bounds.

\section*{Acknowledgments}
I sincerely thank my mentor, Jason Liang, for his guidance and support during the early stages of this research. His mentorship helped lay the foundation for this project and sparked my interest in the mathematical study of generalized Mandelbrot sets. I would further like to thank Majid Mahzoon for his guidance on the framing of this work in its early stages, and Rajkishore Barik and Joseph Laurienzo for encouraging me to engage more thoroughly with the relevant literature. Finally, I would like to thank Donald Snedden, my math teacher, who first suggested exploring Laurent series, and Eric Hallman, who encouraged me to conduct a rigorous numerical analysis convergence study. Together, our discussions on this topic contributed greatly to the development of this paper.

\appendix

\section{Symmetry and Geometric Structure}\label{app:symmetry}
\label{sec:symmetry}

We now formally investigate the reflectional and rotational symmetries inherent to generalized Mandelbrot sets. These symmetries are governed by the dihedral group $D_{j-1}$, the symmetry group of a regular $(j-1)$-gon. In particular, the generalized Multibrot sets exhibit both $(j-1)$-fold rotational symmetry and $(j-1)$ reflection symmetries. Figures~\ref{fig:roots_unity} for degrees $j\in\{4,5,6\}$ provide visual illustrations of these geometric properties.

\begin{figure}[htbp]
    \centering
    \begin{minipage}{0.32\textwidth}
        \centering
        \includegraphics[width=\textwidth]{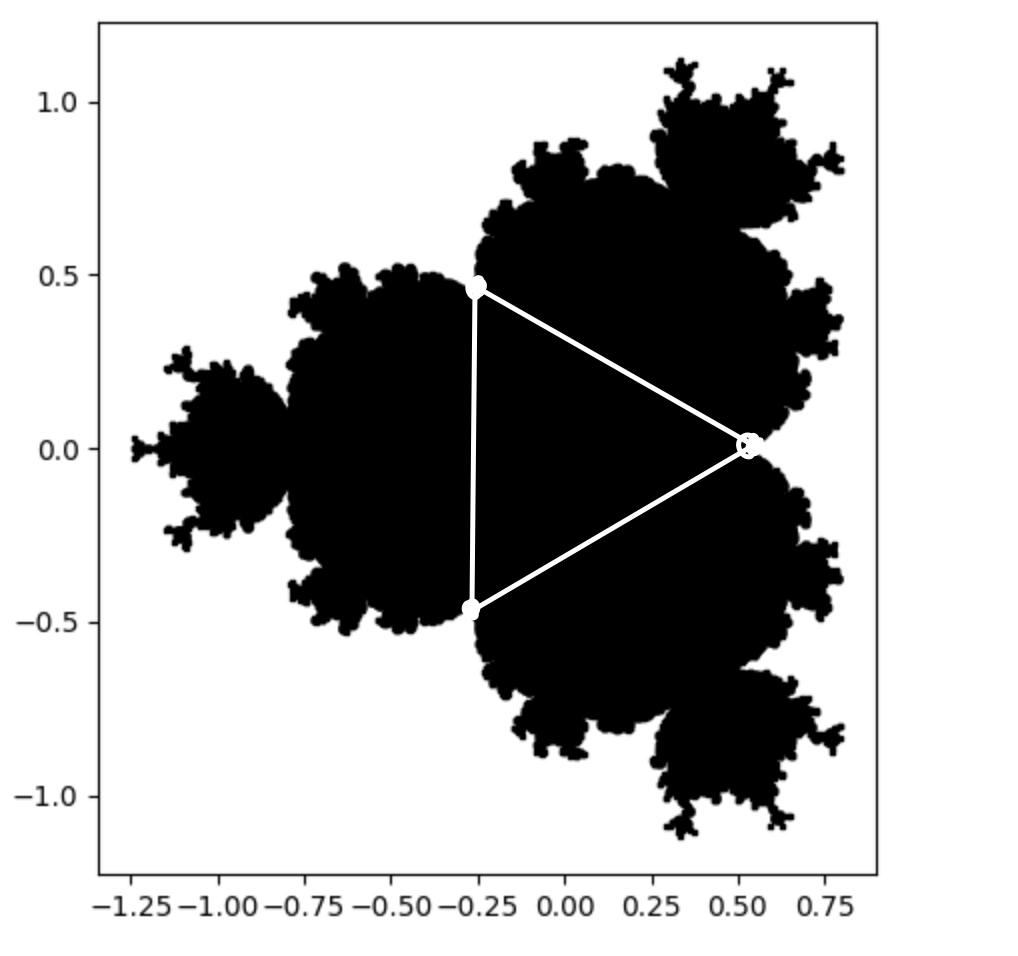}
        \caption*{$f(z) = z^4 + c$ \\ (Triangle)}
    \end{minipage}\hfill
    \begin{minipage}{0.32\textwidth}
        \centering
        \includegraphics[width=\textwidth]{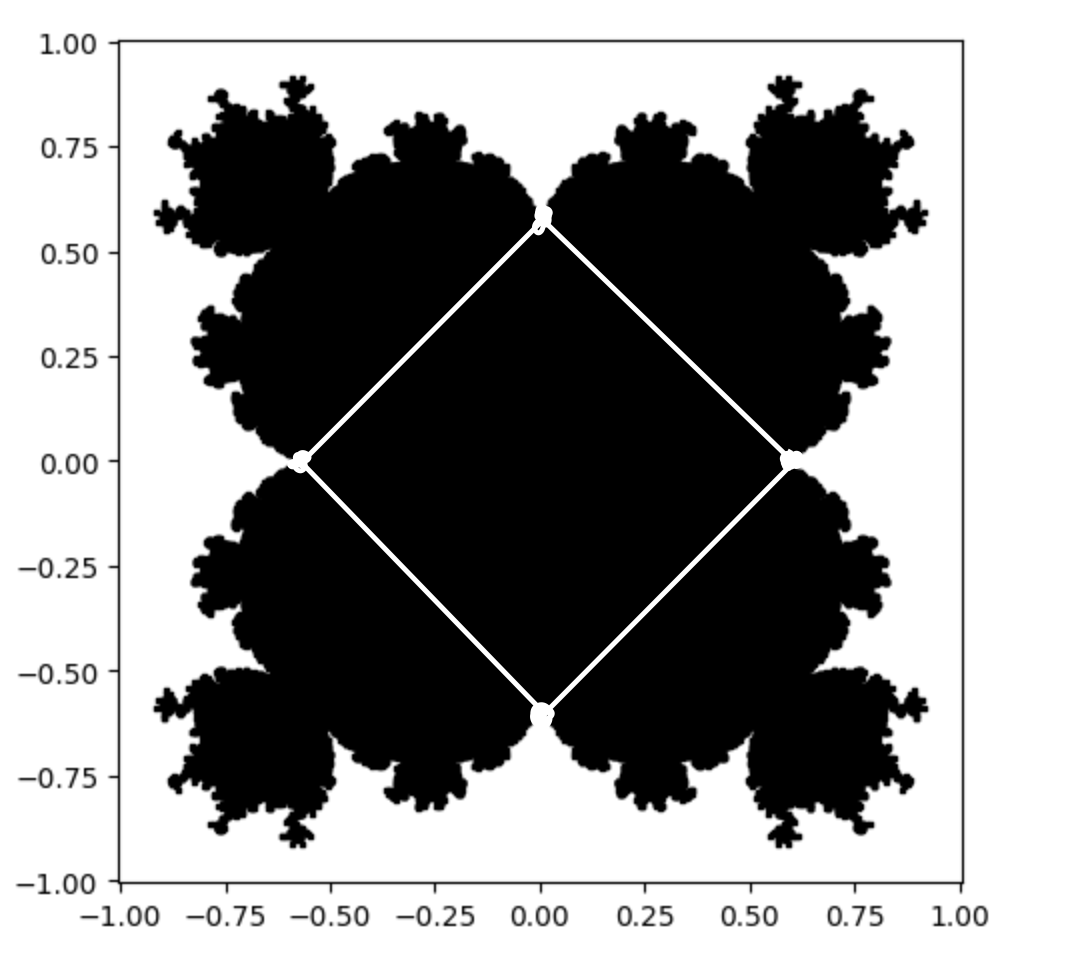}
        \caption*{$f(z) = z^5 + c$ \\ (Square)}
    \end{minipage}\hfill
    \begin{minipage}{0.32\textwidth}
        \centering
        \includegraphics[width=\textwidth]{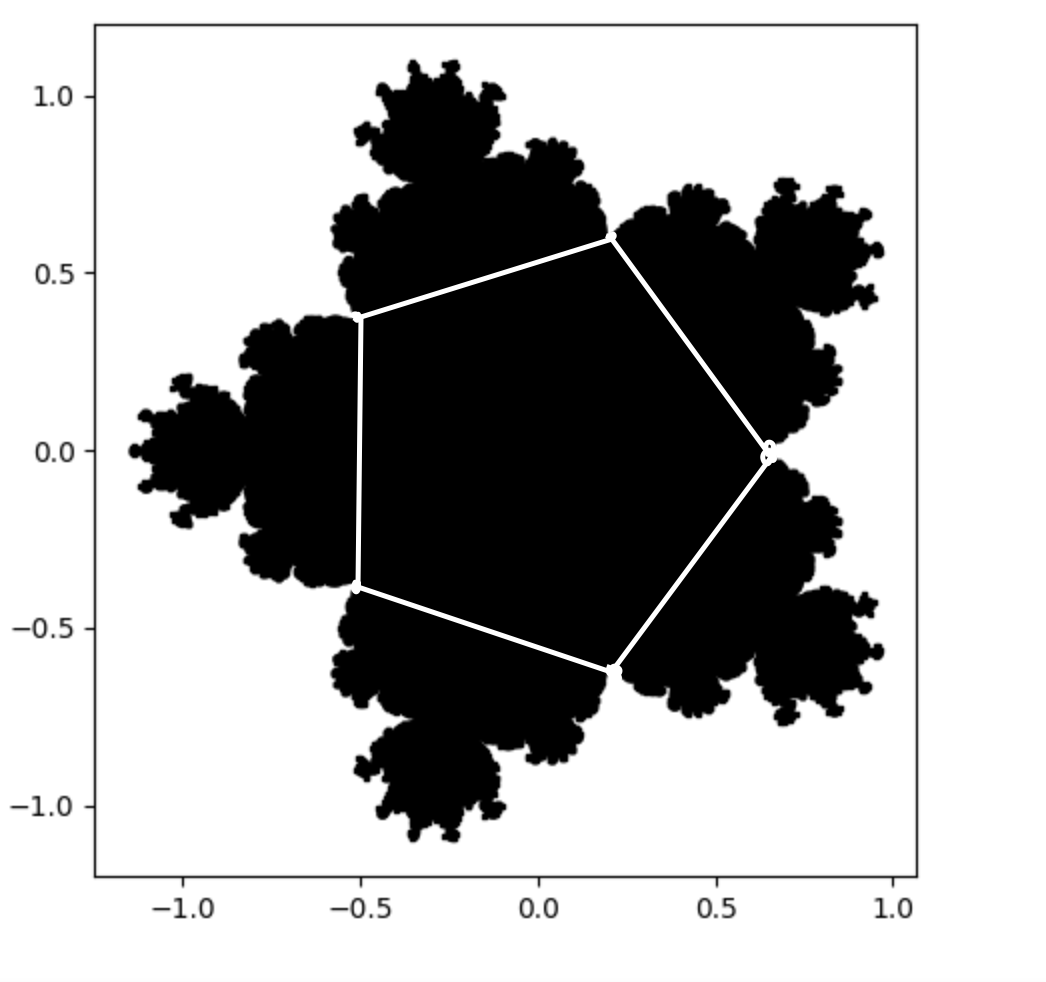}
        \caption*{$f(z) = z^6 + c$ \\ (Pentagon)}
    \end{minipage}
    \caption{Roots of unity structures for $f(z)=z^j+c$ across $j=4,5,6$. Regular $(j-1)$-gons formed by roots of unity placed on the complex unit circle naturally bound the central core geometry of each generalized set.}
    \label{fig:roots_unity}
\end{figure}

More precisely, for a generalized set of degree $j$, the vertices of the inscribed polygon correspond to scaled $(j-1)$-st roots of unity. Introducing a uniform scaling factor $k > 0$, constructing the explicit lines of symmetry reduces to evaluating the perpendicular bisectors originating from these dilated vertices through the origin.

Figure~\ref{fig:symmetry_lines} illustrates the resulting symmetry frameworks and coordinate conversions for $j \in \{4,5,6\}$.

\begin{figure}[htbp]
    \centering
    \begin{minipage}{0.32\textwidth}
        \centering
        \includegraphics[width=\textwidth]{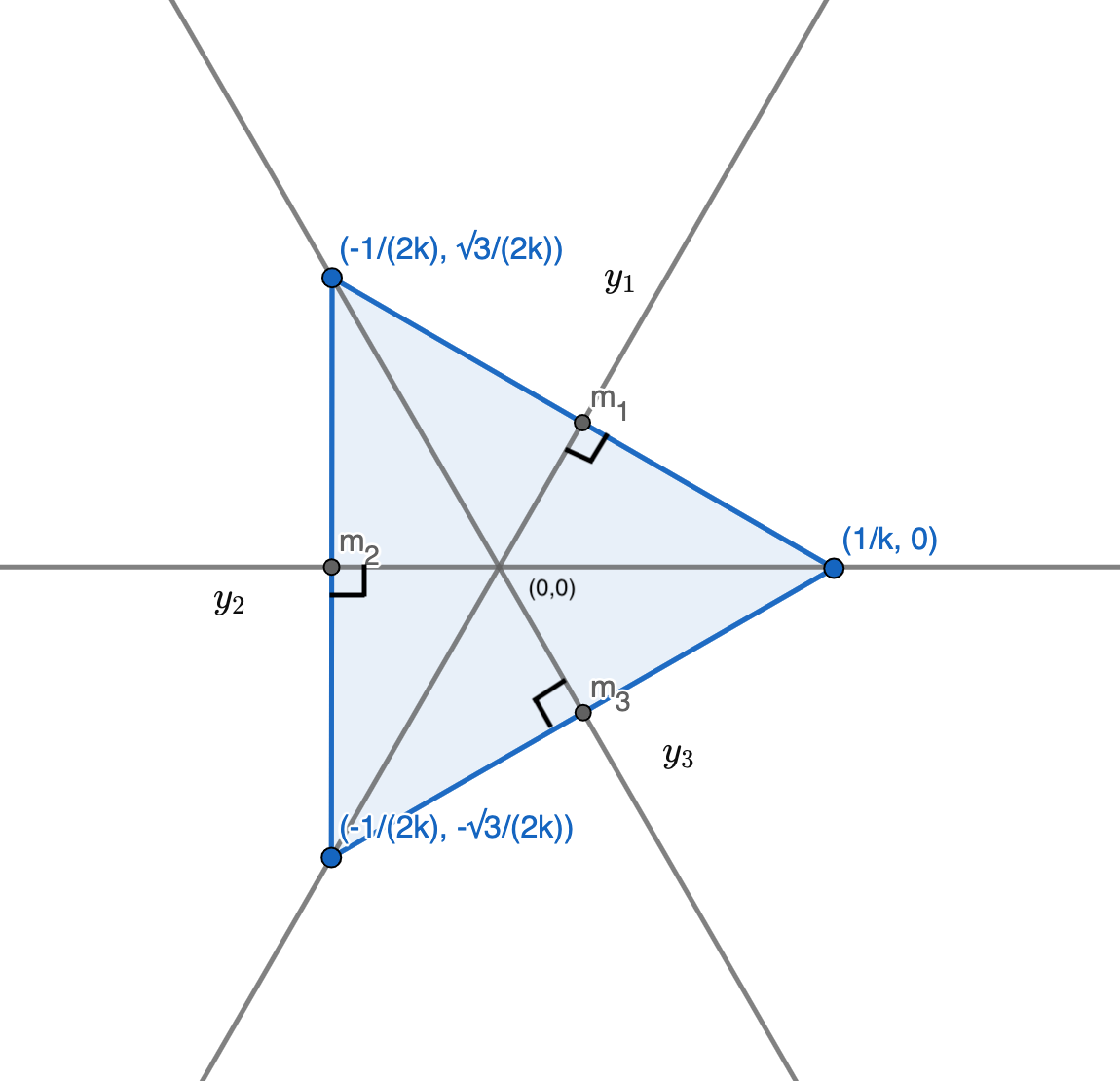}
        \caption*{$f(z) = z^4 + c$ \\ Cube roots of unity}
    \end{minipage}\hfill
    \begin{minipage}{0.32\textwidth}
        \centering
        \includegraphics[width=\textwidth]{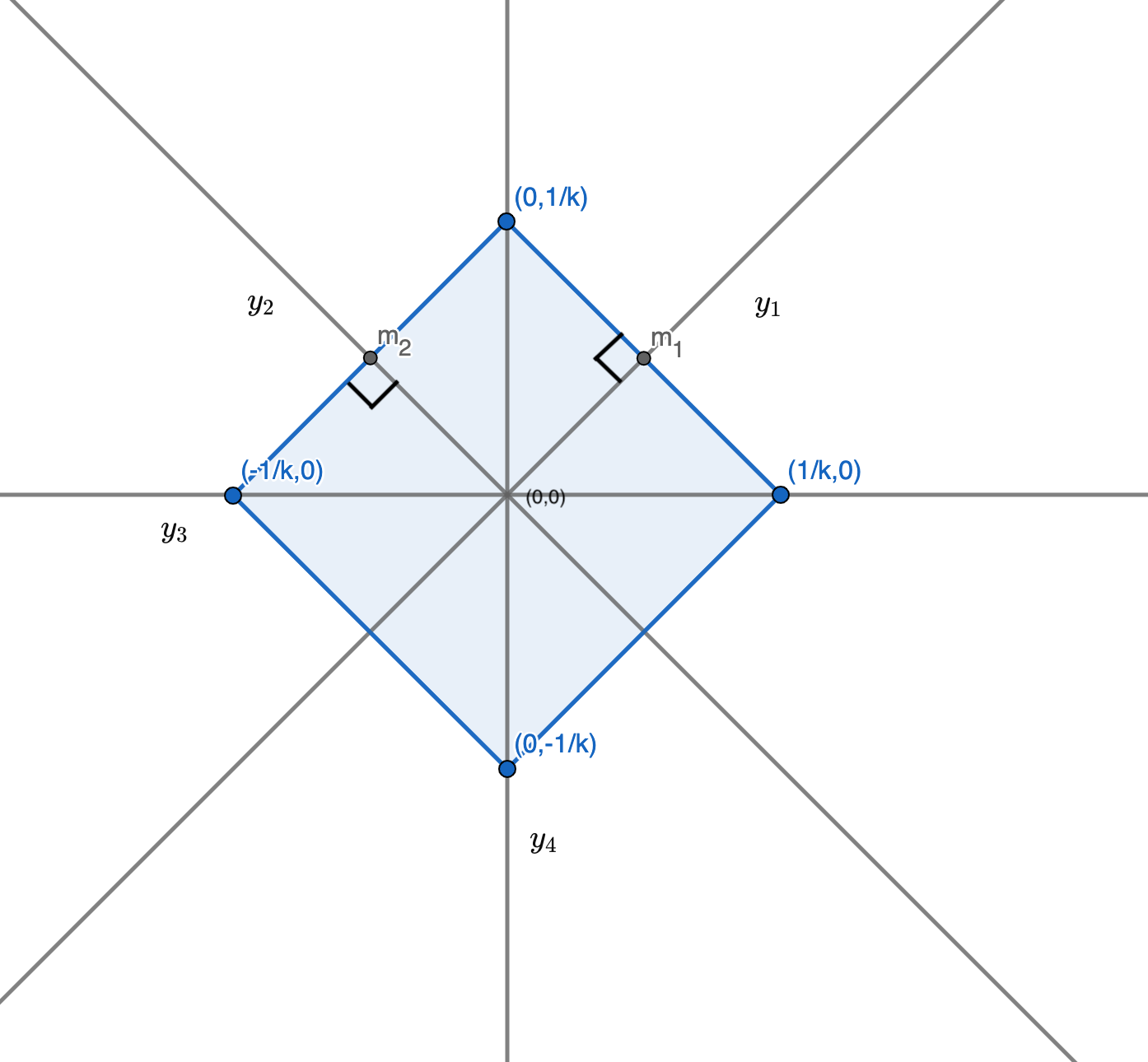}
        \caption*{$f(z) = z^5 + c$ \\ 4th roots of unity}
    \end{minipage}\hfill
    \begin{minipage}{0.32\textwidth}
        \centering
        \includegraphics[width=\textwidth]{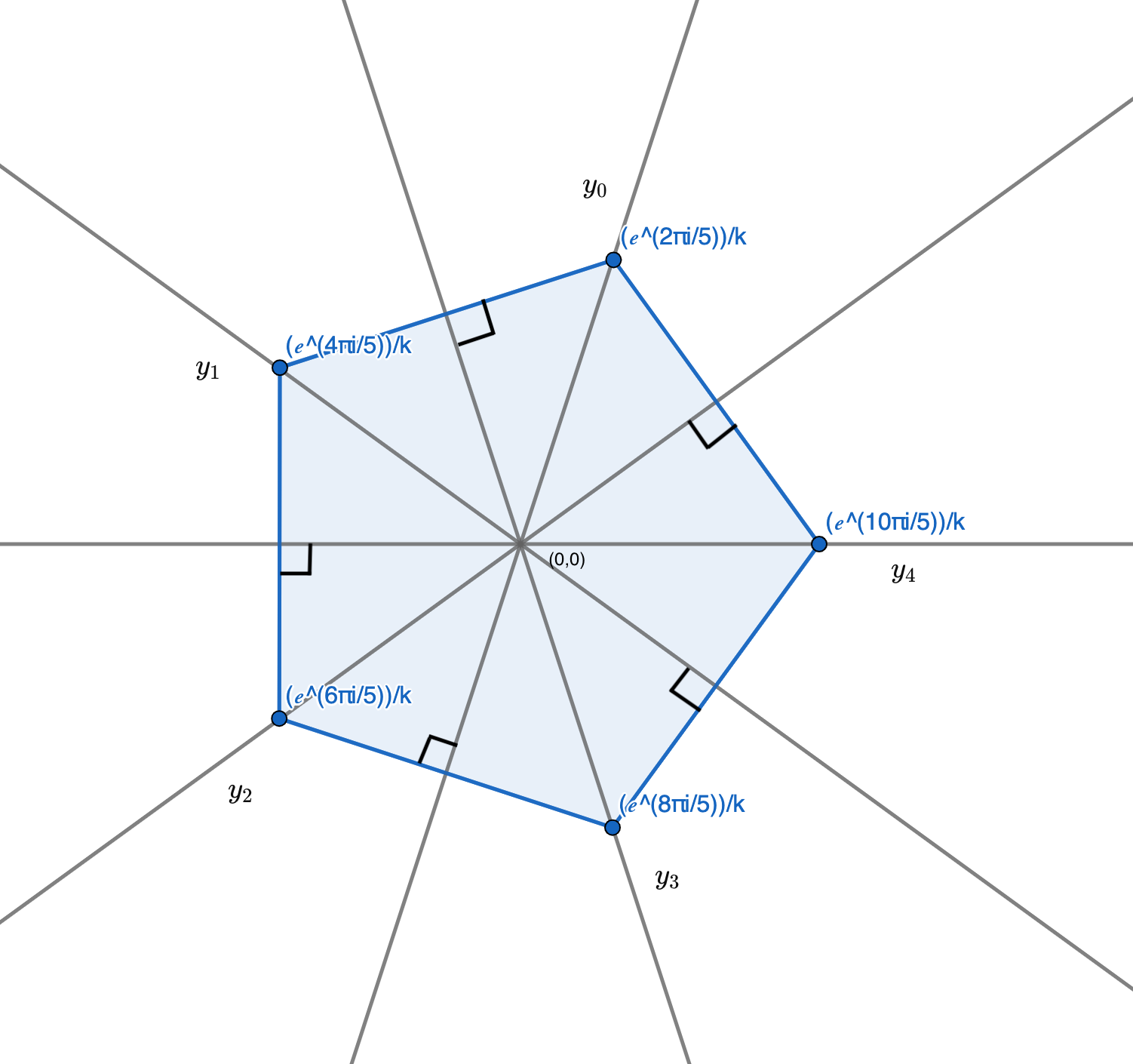}
        \caption*{$f(z) = z^6 + c$ \\ 5th roots of unity}
    \end{minipage}
    \caption{Symmetry line derivations for generalized Mandelbrot sets. Symmetry axes were constructed by converting the roots of unity $z^n=1$ ($n=3,4,5$) into Cartesian coordinates and calculating their perpendicular bisectors through the origin.}
    \label{fig:symmetry_lines}
\end{figure}

To demonstrate this explicit line derivation, consider $f(z) = z^4 + c$. The scaled roots of unity satisfy $(kz)^3 = 1 = e^{2\pi i}$, yielding:
\begin{align*}
    kz_0 &= 1 \implies z_0 = \left(\frac{1}{k}, 0\right) \\
    kz_1 &= e^{2\pi i/3} = -\frac{1}{2} + \frac{\sqrt{3}}{2}i \implies z_1 = \left(-\frac{1}{2k}, \frac{\sqrt{3}}{2k}\right) \\
    kz_2 &= e^{4\pi i/3} = -\frac{1}{2} - \frac{\sqrt{3}}{2}i \implies z_2 = \left(-\frac{1}{2k}, -\frac{\sqrt{3}}{2k}\right)
\end{align*}
Evaluating the lines connecting each vertex through the origin $(0,0)$ via slope-intercept form gives the three principal axes of reflection:
\begin{equation*}
    y_1 = \sqrt{3}x, \quad y_2 = 0, \quad y_3 = -\sqrt{3}x.
\end{equation*}

\subsection{Formal Proof of Invariance}

To rigorously prove reflectional and rotational invariance across generalized degrees, we define two transformation operators on $\mathbb{C} \cong \mathbb{R}^2$:
\begin{enumerate}
    \item \textbf{Reflection Operator $R_m(x,y)$:} Reflection across a line through the origin with slope $m$:
    \begin{equation*}
        R_m(x,y) = \left( \frac{(1-m^2)x + 2my}{1+m^2}, \, \frac{(m^2-1)y + 2mx}{1+m^2} \right).
    \end{equation*}
    \item \textbf{Rotation Operator $r_k(c)$:} Rotation by discrete multiples of $\frac{2\pi}{j-1}$:
    \begin{equation*}
        r_k(c) = c \cdot e^{\frac{2\pi i k}{j-1}}, \quad k \in \{0, 1, \dots, j-2\}.
    \end{equation*}
\end{enumerate}

The following technical lemmas establish the required algebraic preservation properties.

\begin{lemma}\label{lem:norm_preservation}
Let $T \in \{R_m, r_k\}$ be any reflection or rotation operator corresponding to degree $j \in \{4,5,6\}$. Then $T$ preserves modulus:
\begin{equation*}
    |T(z)| = |z| \quad \forall z \in \mathbb{C}.
\end{equation*}
\end{lemma}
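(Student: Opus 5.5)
The plan is to treat the two operator types separately, since each preserves modulus for an elementary reason.

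\emph{Rotations.} For $r_k(z)=z\,e^{2\pi i k/(j-1)}$, I will use multiplicativity of the modulus:
\[
|r_k(z)|=|z|\,\bigl|e^{2\pi i k/(j-1)}\bigr|=|z|.
\]
This holds because $|e^{i\theta}|^2=\cos^2\theta+\sin^2\theta=1$ for real $\theta$. The argument is uniform in $j$ and $k$, so the restriction $j\in\{4,5,6\}$ plays no role here.

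\emph{Reflections.} For $R_m$, I will write $z=(x,y)$ and compute $|R_m(x,y)|^2$ directly from the defining formula. Put the two coordinates over the common denominator $(1+m^2)^2$ and expand the numerator:
\[
\bigl((1-m^2)x+2my\bigr)^2+\bigl((m^2-1)y+2mx\bigr)^2 .
\]
\begin{itemize}
\item The $x^2$ terms give $(1-m^2)^2+4m^2=(1+m^2)^2$.
\item The $y^2$ terms give $4m^2+(m^2-1)^2=(1+m^2)^2$.
\item The cross terms give $4m(1-m^2)xy+4m(m^2-1)xy=0$.
\end{itemize}
So the numerator equals $(1+m^2)^2(x^2+y^2)$, hence $|R_m(z)|^2=|z|^2$. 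Taking nonnegative square roots gives $|R_m(z)|=|z|$.

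The one place needing care is the cross-term cancellation. The reason it works is the sign choice $(m^2-1)$ in the second coordinate, which is exactly what makes the $2\times2$ matrix of $R_m$ orthogonal. As a check, I will note that this matrix has the standard form
\[
\begin{pmatrix}\cos 2\theta & \sin 2\theta\\ \sin 2\theta & -\cos 2\theta\end{pmatrix}
\qquad\text{with } m=\tan\theta .
\]
This confirms orthogonality independently of the expansion.

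\emph{Degenerate cases.} The explicit symmetry axes for $j\in\{4,5,6\}$ include slopes such as $0$ and $\pm\sqrt3$, which are covered by the general computation. A vertical axis (for instance for $j=5$, where $m$ would be infinite) is not covered by the formula. I will handle it separately as the map $(x,y)\mapsto(-x,y)$, which obviously preserves $x^2+y^2$; it can also be viewed as the limit $m\to\infty$ of $R_m$.
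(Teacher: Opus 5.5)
Your proof is correct and matches the paper's: the paper proves the reflection case by the same expansion of $|R_m(z)|^2$ over the denominator $(1+m^2)^2$, with the cross terms cancelling and the squared terms collecting to $(1+m^2)^2|z|^2$. The paper writes this out only for $j=6$ with $m=\tan(2\pi/5)$, a value it never actually uses, and defers the rotations and the other degrees; your general-$m$ computation, one-line rotation argument, and separate handling of the vertical axis for $j=5$ make those cases explicit.
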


\begin{lemma}\label{lem:power_commutation}
For each transformation $T$ specified in Lemma~\ref{lem:norm_preservation}, $T$ commutes with the polynomial exponentiation step:
\begin{equation*}
    T(z^j) = (T(z))^j \quad \forall z \in \mathbb{C}.
\end{equation*}
\end{lemma}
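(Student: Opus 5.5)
The approach is to rewrite both operators in complex form and reduce the claim to the single identity $\omega^j=\omega$ for a $(j-1)$-st root of unity $\omega$.

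\textbf{Rotations.} Put $\omega_k=e^{2\pi i k/(j-1)}$, so that $r_k(z)=\omega_k z$. Then
\[
(r_k(z))^j=\omega_k^j z^j=\omega_k^{j-1}\,\omega_k\, z^j=\omega_k z^j=r_k(z^j),
\]
because $\omega_k^{j-1}=1$. This argument works for every $j\ge2$, so the restriction $j\in\{4,5,6\}$ plays no role here.

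\textbf{Reflections.} First convert $R_m$ to complex notation. Write the slope as $m=\tan\theta$. The half-angle identities give
\[
\frac{1-m^2}{1+m^2}=\cos2\theta,\qquad \frac{2m}{1+m^2}=\sin2\theta .
\]
Hence $R_m(x,y)=(x\cos2\theta+y\sin2\theta,\;x\sin2\theta-y\cos2\theta)$. In complex form this is $R_m(z)=e^{2i\theta}\,\bar z$, which I will check directly by expanding $e^{2i\theta}(x-iy)$.

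The key structural input is to pin down which slopes are admissible. These are the symmetry axes described in the appendix: the lines through the origin and the scaled $(j-1)$-st roots of unity, together with their perpendicular bisectors where present, e.g. the diagonals for $j=5$. All of these lines have angles $\theta=\pi\ell/(j-1)$ for an integer $\ell$. For $j=4$, for instance, the slopes $0,\pm\sqrt3$ correspond to $\theta\in\{0,\pi/3,2\pi/3\}$.

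With $\theta$ of this form, $\eta:=e^{2i\theta}=e^{2\pi i\ell/(j-1)}$ is again a $(j-1)$-st root of unity. Using $\overline{z^j}=\bar z^{\,j}$, the same computation as for rotations gives
\[
(R_m(z))^j=\eta^j\bar z^{\,j}=\eta\,\overline{z^j}=R_m(z^j).
\]

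\textbf{Main obstacle.} The delicate step is the bookkeeping of which lines count as admissible symmetry axes, since the lemma is false for an arbitrary slope $m$. A second issue is the vertical axis: it occurs for $j=5$, where $x=0$ is an axis, and it is not covered by the slope formula.

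I would first state explicitly that the admissible $m$ are exactly $\tan(\pi\ell/(j-1))$. I would then handle the vertical line as the limit case $\theta=\pi/2$, i.e. $R(z)=-\bar z$. For $j=5$ this axis is indeed admissible: $-1=e^{2\pi i\cdot 2/4}$ is a fourth root of unity, so $(-1)^5=-1$ and the identity holds.

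Finally, for concreteness, I would spot-check one case directly in real coordinates, such as the reflection $y=\sqrt3x$ for $j=4$, to confirm agreement with the paper's operator $R_m$.
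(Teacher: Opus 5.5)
Your argument is correct, and it takes a genuinely different route from the paper.

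\textbf{The paper's proof.} The paper proves the lemma for only one representative case: $j=6$ with the axis $y=\tan(2\pi/5)x$. It expands $z^6$ by the binomial theorem in real coordinates and writes out $R(z^6)$ componentwise. It then asserts that expanding $(R(z))^6$ symbolically agrees term-by-term. The remaining reflections ($j=4,5$) and all rotation cases are deferred to an external repository.

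\textbf{Your proof.} You rewrite the operator as $R_m(z)=e^{2i\theta}\bar z$ using the double-angle formulas in terms of $\tan\theta$ (calling them half-angle identities is only a naming slip). Every case then reduces to two facts: $\eta^{j-1}=1$ for a $(j-1)$-st root of unity $\eta$, and $\overline{z^j}=\bar z^{\,j}$. The paper's case is a special instance: $\theta=2\pi/5$ gives $\eta=e^{4\pi i/5}$, which is a fifth root of unity.

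\textbf{What your route buys.} It handles all $j\ge 2$ and all $D_{j-1}$ axes at once. It makes explicit the hypothesis that the paper leaves implicit in the phrase ``corresponding to degree $j$'', namely $m=\tan(\pi\ell/(j-1))$. It explains why the identity fails for any other slope. It also covers the vertical axis $x=0$ for $j=5$, which the slope-parametrized operator $R_m$ cannot express.

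\textbf{What the paper's route buys.} The brute-force expansion stays within the real-coordinate definition of $R_m$ and needs no reinterpretation of the operator. However, it is tied to one $j$ and one slope, and it says nothing about which slopes work. As written, the decisive expansion of $(R(z))^6$ is asserted rather than shown.
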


\begin{lemma}\label{lem:iterate_commutation}
Let $z_{n,c}$ denote the $n$-th iterate of $f_c(z) = z^j + c$ starting from $z_{0,c} = 0$. For any transformation $T$ satisfying Lemmas~\ref{lem:norm_preservation} and \ref{lem:power_commutation}:
\begin{equation*}
    z_{n, T(c)} = T(z_{n,c}) \quad \forall n \ge 0.
\end{equation*}
\end{lemma}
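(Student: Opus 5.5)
The plan is to argue by induction on $n$, using two facts about $T$: Lemma~\ref{lem:power_commutation}, and the additivity of the operators themselves. Additivity is not among the hypotheses listed in the statement, so I would verify it directly from the definitions. Both $R_m$ and $r_k$ are $\mathbb R$-linear maps of $\mathbb R^2\cong\mathbb C$. The formula for $R_m(x,y)$ has components that are linear in $(x,y)$. The map $r_k$ is multiplication by the fixed unimodular constant $e^{2\pi i k/(j-1)}$. Hence
\[
T(a+b)=T(a)+T(b)\quad\text{for all } a,b\in\mathbb C,
\qquad\text{and in particular}\qquad T(0)=0.
\]
It is cleanest to record this as a preliminary observation before the induction.

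\textbf{Base case.} For $n=0$ we have $z_{0,T(c)}=0=T(0)=T(z_{0,c})$.

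\textbf{Inductive step.} Assume $z_{n,T(c)}=T(z_{n,c})$. Then
\[
z_{n+1,T(c)}=z_{n,T(c)}^j+T(c)=\bigl(T(z_{n,c})\bigr)^j+T(c)=T\bigl(z_{n,c}^j\bigr)+T(c)=T\bigl(z_{n,c}^j+c\bigr)=T(z_{n+1,c}).
\]
The steps are justified as follows:
\begin{itemize}
\item the first equality is the recursion defining the orbit of $T(c)$;
\item the second uses the induction hypothesis;
\item the third is Lemma~\ref{lem:power_commutation} applied to $z=z_{n,c}$;
\item the fourth is additivity;
\item the last is the recursion defining the orbit of $c$.
\end{itemize}
This closes the induction and proves the statement for all $n\ge0$.

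\textbf{Remark on Lemma~\ref{lem:norm_preservation}.} Modulus preservation plays no role in the identity itself. Its role comes afterwards: combined with the identity, it gives $|z_{n,T(c)}|=|z_{n,c}|$ for every $n$. Hence the orbit of $T(c)$ is bounded exactly when the orbit of $c$ is, which yields $T(M_j)=M_j$ and also $T(M_{j,K})=M_{j,K}$. I would state this consequence right after the induction, since it is the reason for the lemma.

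\textbf{Main obstacle.} I expect the main difficulty to be bookkeeping rather than analysis. The hypotheses as phrased (Lemmas~\ref{lem:norm_preservation} and~\ref{lem:power_commutation}) do not literally include additivity, and the induction genuinely needs $T(z^j+c)=T(z^j)+T(c)$. A general modulus-preserving map that commutes with $z\mapsto z^j$ need not be additive. I would therefore restrict explicitly to the concrete operators $R_m,r_k$ and check their real-linearity from the formulas.

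As a consistency check on Lemma~\ref{lem:power_commutation} for rotations, note that $j\equiv1 \pmod{j-1}$. Therefore
\[
r_k(z)^j=z^j e^{2\pi i kj/(j-1)}=z^j e^{2\pi i k/(j-1)}=r_k(z^j),
\]
which confirms that the lemma is used exactly as stated.
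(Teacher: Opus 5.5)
Your proof is correct and follows the paper's argument exactly: induction on $n$, with base case $T(0)=0$, and an inductive step that chains the induction hypothesis, Lemma~\ref{lem:power_commutation}, and additivity of $T$. The paper invokes ``linearity of $T$'' at that same step without further justification, so your explicit check that $R_m$ and $r_k$ are $\mathbb{R}$-linear makes this implicit extra hypothesis visible; your observations that modulus preservation is not used in the identity, and that additivity does not follow from the two lemmas, are also correct.
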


\begin{proof}
We proceed by induction on the iteration depth $n$.
\begin{itemize}
    \item \textbf{Base Case ($n=0$):} By definition, $z_{0, T(c)} = 0$. Since $T(0) = 0$, $z_{0, T(c)} = T(z_{0,c})$ holds trivially.
    \item \textbf{Inductive Step:} Assume $z_{N, T(c)} = T(z_{N,c})$ holds for some $N \ge 0$. Applying the recurrence relation and Lemma~\ref{lem:power_commutation}:
    \begin{align*}
        z_{N+1, T(c)} &= (z_{N, T(c)})^j + T(c) \\
                      &= (T(z_{N,c}))^j + T(c) \quad \text{(by inductive hypothesis)} \\
                      &= T(z_{N,c}^j) + T(c) \quad \text{(by Lemma~\ref{lem:power_commutation})} \\
                      &= T(z_{N,c}^j + c) \quad \text{(by linearity of $T$)} \\
                      &= T(z_{N+1,c}).
    \end{align*}
\end{itemize}
By induction, $z_{n, T(c)} = T(z_{n,c})$ for all $n \in \mathbb{N}_0$.
\end{proof}

\subsection{Proof of Lemma Properties for $j=6$}

To demonstrate the concrete algebraic verification of Lemmas~\ref{lem:norm_preservation} and \ref{lem:power_commutation}, we present the proof for the most computationally intensive reflection case: $j=6$ across the line $y = \tan\left(\frac{2\pi}{5}\right)x$. The full derivations for all remaining reflectional cases ($j=4, 5$) and rotational symmetry cases are provided in the Appendix and archived in the project GitHub repository. 

\begin{proof}[Proof of Lemma~\ref{lem:norm_preservation} for $j=6$]
Let $z = a + bi \cong (a,b)$ and let $m = \tan\left(\frac{2\pi}{5}\right)$. Using $1 + m^2 = \sec^2\left(\frac{2\pi}{5}\right)$, the reflection operator simplifies to:
\begin{equation*}
    R(a,b) = \left( \frac{(1-m^2)a + 2mb}{\sec^2\left(\frac{2\pi}{5}\right)}, \, \frac{(m^2-1)b + 2ma}{\sec^2\left(\frac{2\pi}{5}\right)} \right).
\end{equation*}
Evaluating $|R(z)|^2$:
\begin{align*}
    |R(a,b)|^2 &= \frac{\left[(1-m^2)a + 2mb\right]^2 + \left[(m^2-1)b + 2ma\right]^2}{\sec^4\left(\frac{2\pi}{5}\right)} \\
    &= \frac{(1-m^2)^2 a^2 + 4m(1-m^2)ab + 4m^2 b^2 + (m^2-1)^2 b^2 + 4m(m^2-1)ab + 4m^2 a^2}{(1+m^2)^2} \\
    &= \frac{(1 + 2m^2 + m^4)a^2 + (1 + 2m^2 + m^4)b^2}{(1+m^2)^2} \\
    &= \frac{(1+m^2)^2 (a^2 + b^2)}{(1+m^2)^2} = a^2 + b^2 = |z|^2.
\end{align*}
Taking the square root yields $|R(z)| = |z|$.
\end{proof}

\begin{proof}[Proof of Lemma~\ref{lem:power_commutation} for $j=6$]
Expanding $z^6 = (a+bi)^6$ via the Binomial Theorem yields:
\begin{equation*}
    z^6 = (a^6 - 15a^4b^2 + 15a^2b^4 - b^6) + i(6a^5b - 20a^3b^3 + 6ab^5).
\end{equation*}
Applying the reflection operator directly to $z^6$ gives:
\begin{align*}
    R(z^6) &= \frac{(1-m^2)(a^6 - 15a^4b^2 + 15a^2b^4 - b^6) + 2m(6a^5b - 20a^3b^3 + 6ab^5)}{\sec^2\left(\frac{2\pi}{5}\right)} \\
    &\quad + i \left[ \frac{(m^2-1)(6a^5b - 20a^3b^3 + 6ab^5) + 2m(a^6 - 15a^4b^2 + 15a^2b^4 - b^6)}{\sec^2\left(\frac{2\pi}{5}\right)} \right].
\end{align*}
Conversely, expanding $(R(z))^6$ symbolically confirms exact agreement term-by-term:
\begin{equation*}
    (R(z))^6 = R(z^6).
\end{equation*}
\end{proof}

While the symbolic expansion involves extensive algebra, Figure~\ref{fig:surface_plots} provides numerical and visual verification of this identity across the complex domain.

\begin{figure}[p] % [p] forces it onto its own full page if needed to prevent float overflow
    \centering
    % Maximum safe margin bleed (15% into left/right margins)
    \hspace*{-0.15\textwidth}%
    \begin{minipage}{0.62\textwidth}
        \centering
        \includegraphics[width=\linewidth, height=13.8cm, keepaspectratio=false]{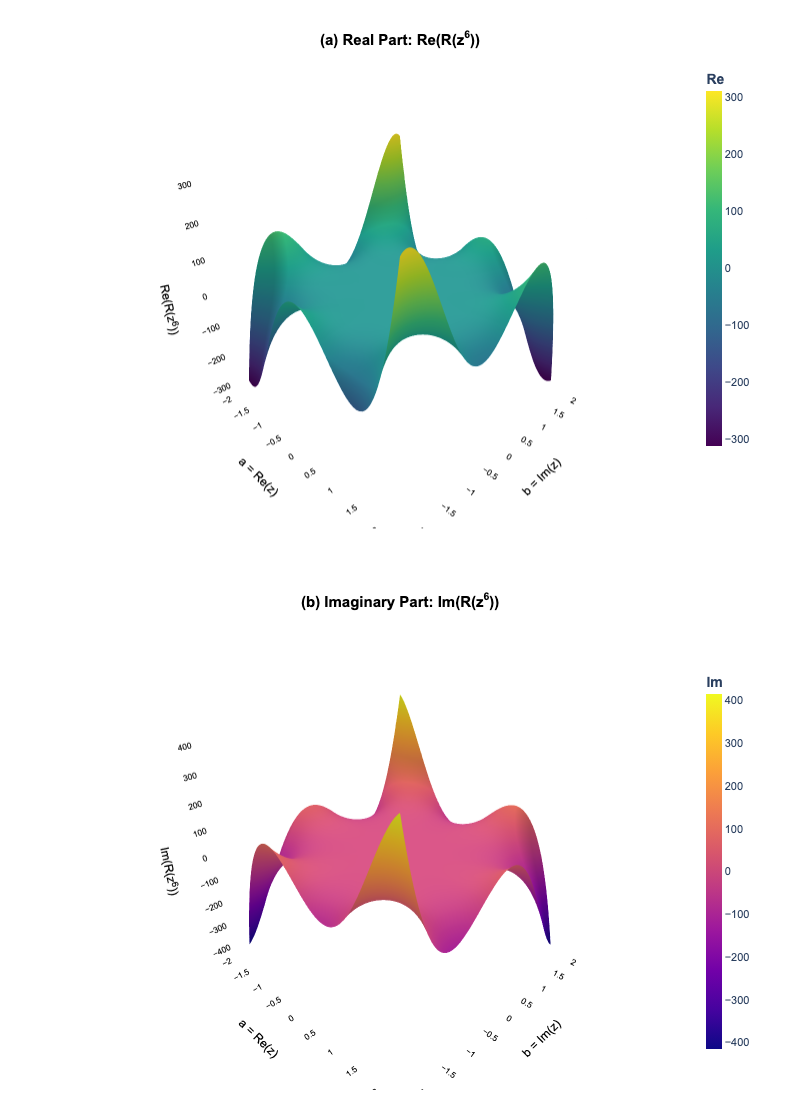}
        \caption*{$R(z^6)$}
    \end{minipage}%
    \hfill%
    \begin{minipage}{0.62\textwidth}
        \centering
        \includegraphics[width=\linewidth, height=13.8cm, keepaspectratio=false]{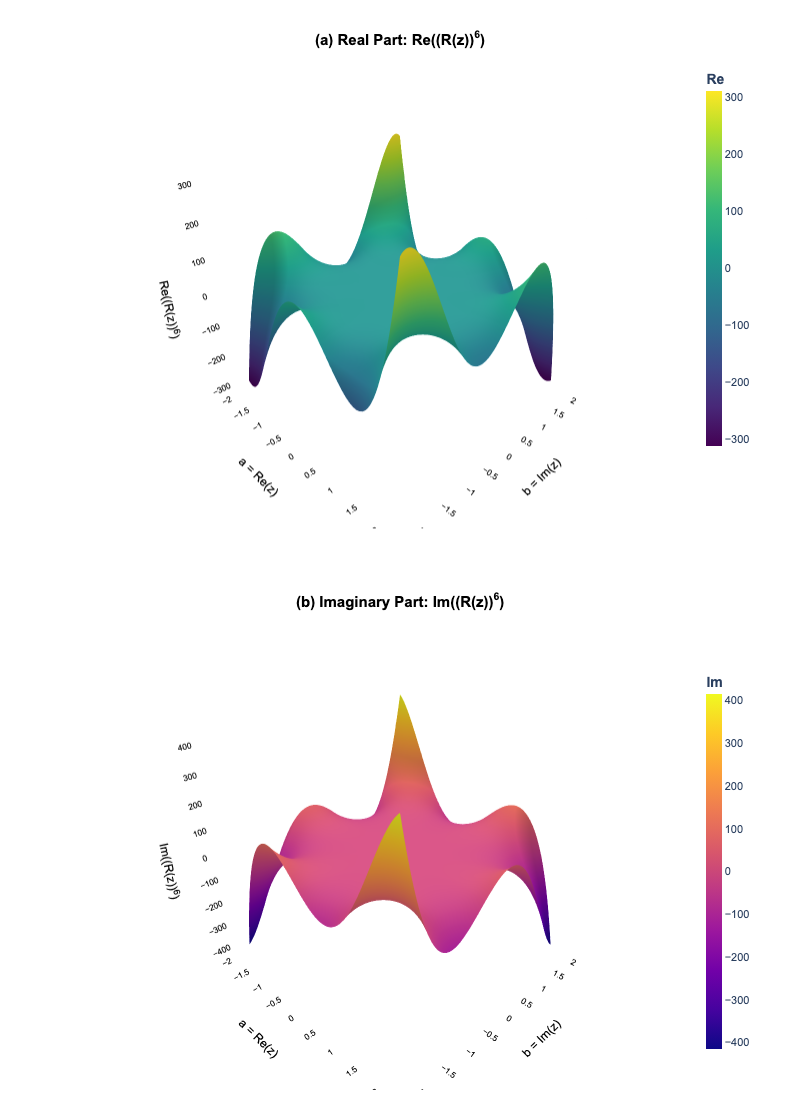}
        \caption*{$(R(z))^6$}
    \end{minipage}%
    \hspace*{-0.15\textwidth}
    \vspace{0.2cm}
    \caption{Surface-plot comparison of $R(z^6)$ and $(R(z))^6$. The real and imaginary components agree visually over the plotted domain, illustrating the identity $R(z^6) = (R(z))^6$ established algebraically above.}
    \label{fig:surface_plots}
\end{figure}

\subsection{Main Symmetry Theorem}

Equipped with Lemmas~\ref{lem:norm_preservation}--\ref{lem:iterate_commutation}, we state and prove the primary symmetry theorem governing generalized Mandelbrot sets.

\begin{theorem}[Symmetry Invariance]\label{thm:main_symmetry}
Let $\mathcal{M}_j$ denote the generalized Mandelbrot set of degree $j$, and let $T \in \{R_m, r_k\}$ be any symmetry transformation satisfying Lemmas~\ref{lem:norm_preservation} and \ref{lem:power_commutation}. If $c \in \mathcal{M}_j$, then $T(c) \in \mathcal{M}_j$.
\end{theorem}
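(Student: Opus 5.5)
The natural route is to transport the entire critical orbit through $T$ and show that boundedness is preserved. I will take $\mathcal{M}_j$ to be the bounded-orbit set $M_j$ defined in Section~\ref{sec:preliminaries}: $c\in\mathcal{M}_j$ exactly when $(z_{n,c})_{n\ge0}$ is bounded. Fix $c\in\mathcal{M}_j$ and choose $B$ with $|z_{n,c}|\le B$ for all $n\ge 0$. By Lemma~\ref{lem:escape} we may even take $B=2$, though any finite bound works.

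\textbf{Step 1: conjugate the orbit.} Since $T$ satisfies Lemmas~\ref{lem:norm_preservation} and~\ref{lem:power_commutation}, Lemma~\ref{lem:iterate_commutation} applies. It gives $z_{n,T(c)}=T(z_{n,c})$ for every $n\ge0$. The inductive proof of that lemma also uses that $T$ is real-linear, so $T(0)=0$ and $T(u+v)=T(u)+T(v)$. I will check this explicitly: $R_m$ is given by a $2\times 2$ real matrix, and $r_k$ is multiplication by a fixed complex number. So the linearity hypothesis holds for every admissible $T$.

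\textbf{Step 2: transfer the bound.} By Lemma~\ref{lem:norm_preservation}, $|z_{n,T(c)}|=|T(z_{n,c})|=|z_{n,c}|\le B$ for all $n$. Hence the critical orbit of $f_{T(c)}$ is bounded, and $T(c)\in\mathcal{M}_j$. The same argument also shows that $T(c)$ never exceeds the bailout radius $2$ at any finite step. Thus the finite-iteration sets $M_{j,K}$ are invariant as well, which is a useful corollary for the numerical pictures.

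\textbf{Main obstacle.} The real content lies in the hypothesis that $T$ satisfies Lemma~\ref{lem:power_commutation}, not in the deduction above. For the rotations this is immediate: with $\omega=e^{2\pi ik/(j-1)}$ we have $\omega^{j}=\omega$, so $(\omega z)^j=\omega z^j$. For the reflections, write $R(z)=e^{2i\theta}\bar z$; then $R(z)^j=e^{2ij\theta}\bar z^{\,j}$, which equals $R(z^j)$ exactly when $e^{2i(j-1)\theta}=1$. I would verify this condition for the axes $\theta=\pi\ell/(j-1)$ in place of the term-by-term binomial expansion, since it is cleaner and covers all $j$ at once. With that identity in hand, the theorem follows from Steps 1 and 2 alone. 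As a final remark, $T$ is invertible and $T^{-1}$ is again such a symmetry, so the argument in fact gives $T(\mathcal{M}_j)=\mathcal{M}_j$.
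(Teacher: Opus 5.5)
Your proof is correct and is essentially the paper's argument: you transport the critical orbit with Lemma~\ref{lem:iterate_commutation} and then transfer the bound $B$ using Lemma~\ref{lem:norm_preservation}. Your extra remarks are also correct. These are the explicit check of the real-linearity that the paper's inductive step invokes without listing as a hypothesis, the criterion $e^{2i(j-1)\theta}=1$ for reflections, and the invertibility argument giving $T(\mathcal{M}_j)=\mathcal{M}_j$.
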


\begin{proof}
Suppose $c \in \mathcal{M}_j$. By definition, the sequence of iterates $z_{n,c}$ remains bounded; that is, there exists some $B > 0$ such that $|z_{n,c}| < B$ for all $n \in \mathbb{N}_0$.

By Lemma~\ref{lem:iterate_commutation}, $z_{n, T(c)} = T(z_{n,c})$ for all $n$. Applying Lemma~\ref{lem:norm_preservation}:
\begin{equation*}
    |z_{n, T(c)}| = |T(z_{n,c})| = |z_{n,c}| < B \quad \forall n \in \mathbb{N}_0.
\end{equation*}
Thus, the sequence of iterates $z_{n, T(c)}$ remains bounded by $B$. Consequently, $T(c) \in \mathcal{M}_j$, establishing that $\mathcal{M}_j$ is invariant under $T$.
\end{proof}

\section{Morphological Evolution and Asymptotic Circularity}\label{app:morphology}
\label{sec:unique}

A striking feature of the generalized family $f(z) = z^j + c$ is the structural evolution of the main body as the polynomial degree $j$ increases. For any finite degree $j \ge 3$, the set exhibits $j-1$ distinct boundary lobes or "petals." These feature components directly reflect the discrete reflectional and rotational symmetries established in Section~\ref{sec:symmetry}.

Crucially, as $j$ increases, the angular spacing between adjacent symmetry axes ($\frac{2\pi}{j-1}$) decreases monotonically. This proliferation of lobes induces a visual and geometric smoothing effect: the discrete $(j-1)$-gonal boundary structures become progressively finer and densely packed around the complex unit circle $\mathbb{T} = \{c \in \mathbb{C} : |c| = 1\}$. 

In the asymptotic limit $j \to \infty$, the discrete rotational invariance transitions into continuous $S^1$ rotational symmetry, causing the macroscopic boundary of $\mathcal{M}_j$ to converge toward the unit disk $D_1(0)$. This structural compression provides the geometric intuition underlying the area convergence $A(\mathcal{M}_j) \to \pi$ demonstrated in Section~\ref{sec:area}.

Figure~\ref{fig:petal_structures} illustrates these structural features across degrees $j \in \{4, 6, 7, 10\}$.

\begin{figure}[htbp]
    \centering
    \makebox[\textwidth][c]{%
        \begin{minipage}{0.40\textwidth}
            \centering
            \includegraphics[width=0.75\textwidth]{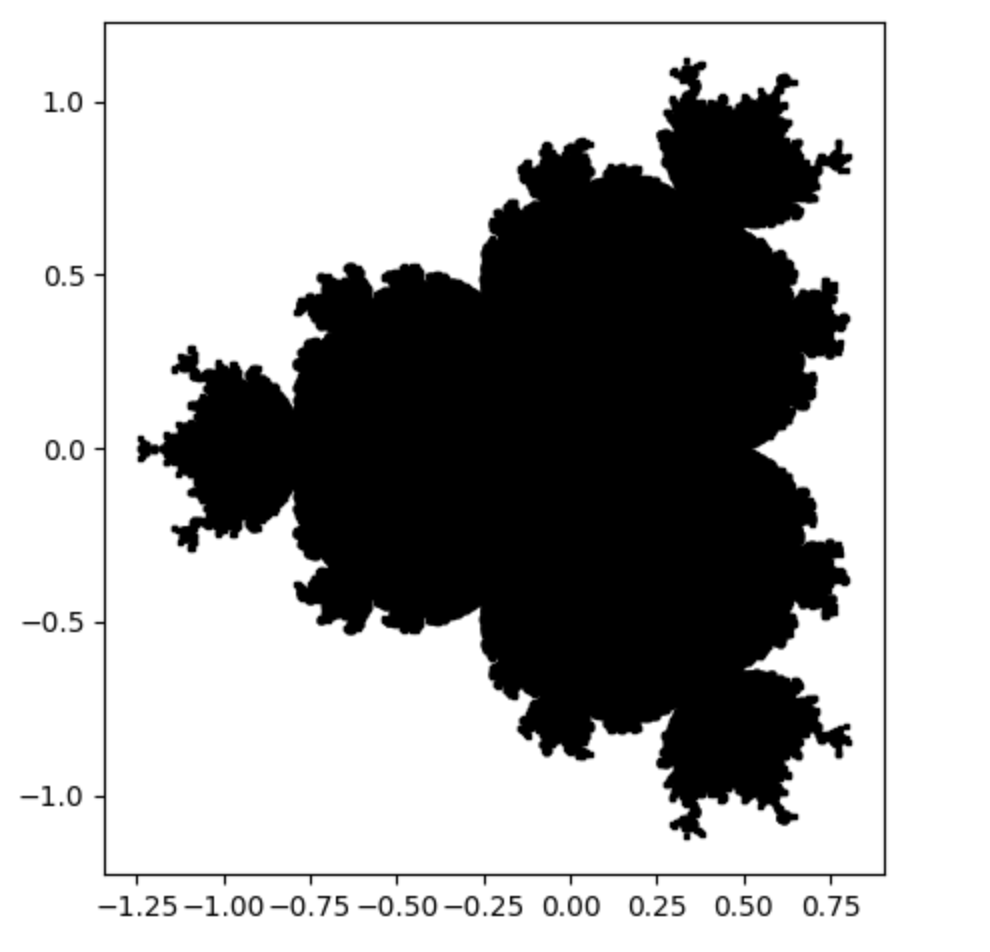}\\[0.2em]
            {\small $f(z) = z^4 + c$ \quad (3 lobes)}
        \end{minipage}\hspace{0.02\textwidth}%
        \begin{minipage}{0.40\textwidth}
            \centering
            \includegraphics[width=0.75\textwidth]{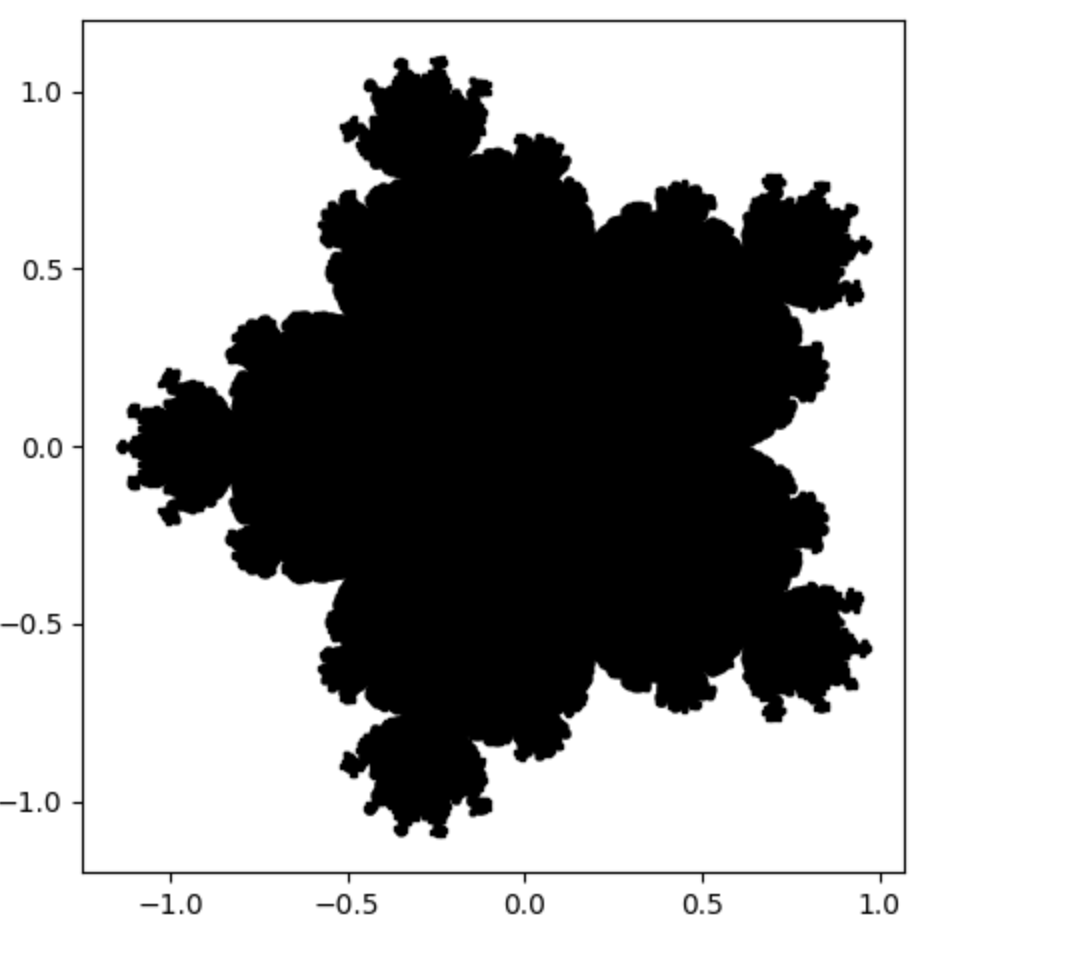}\\[0.2em]
            {\small $f(z) = z^6 + c$ \quad (5 lobes)}
        \end{minipage}%
    }
    
    \vspace{0.4em}
    
    \makebox[\textwidth][c]{%
        \begin{minipage}{0.40\textwidth}
            \centering
            \includegraphics[width=0.75\textwidth]{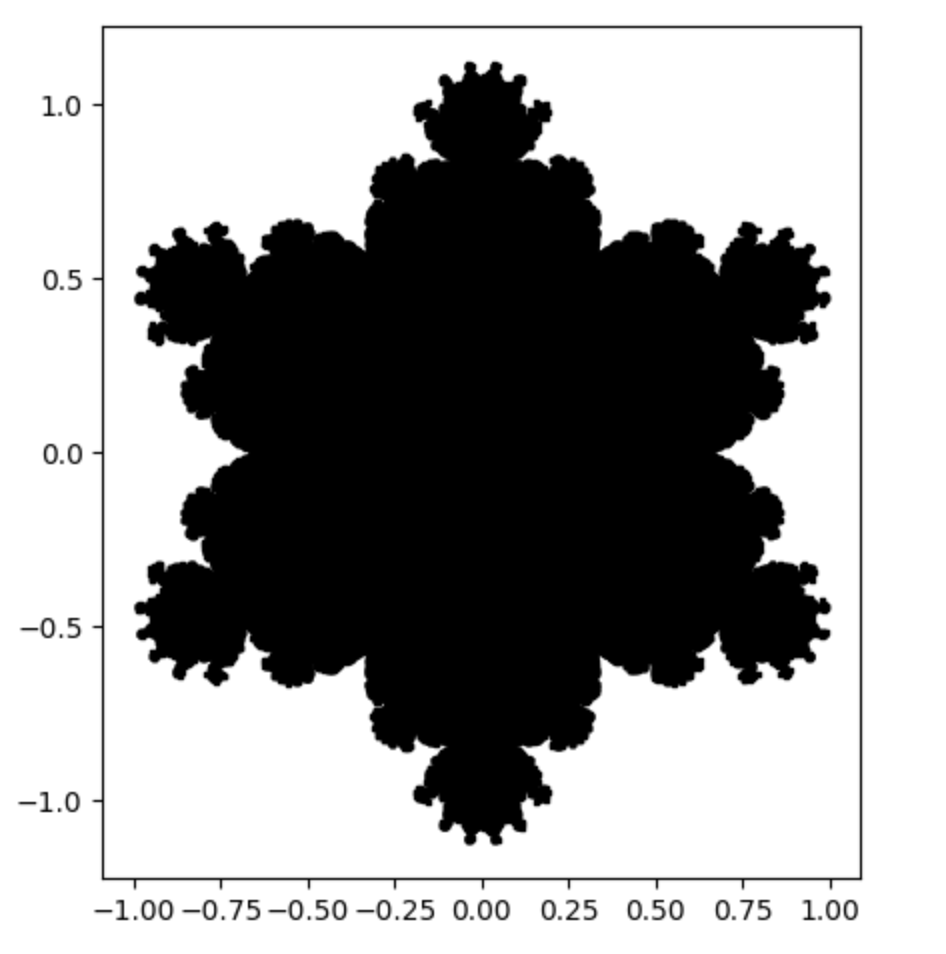}\\[0.2em]
            {\small $f(z) = z^7 + c$ \quad (6 lobes)}
        \end{minipage}\hspace{0.02\textwidth}%
        \begin{minipage}{0.40\textwidth}
            \centering
            \includegraphics[width=0.75\textwidth]{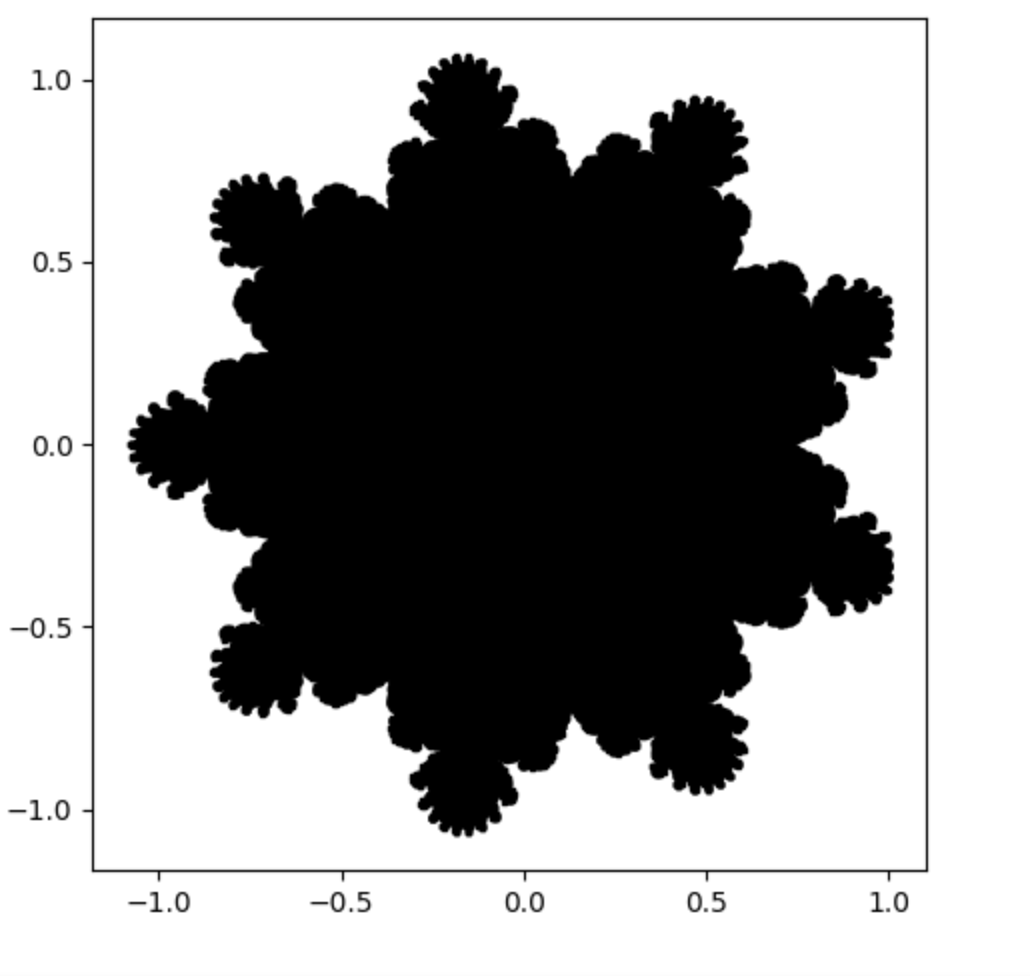}\\[0.2em]
            {\small $f(z) = z^{10} + c$ \quad (9 lobes)}
        \end{minipage}%
    }
    \caption{Morphological petal structures across increasing polynomial degrees $j$. Each set of degree $j$ exhibits $(j-1)$-fold rotational symmetry. As $j$ increases, the primary lobes multiply, sharpen, and uniformly compress toward the complex unit circle $\mathbb{T}$, driving the set's macroscopic geometry toward circularity.}
    \label{fig:petal_structures}
\end{figure}

\section{Additional Numerical Visualizations}\label{app:visuals}

\subsection{High-Degree Shape Evolution}
\textit{As $j$ increases, the Multibrot sets become increasingly well approximated by the unit disk.}\\

\noindent Representative Multibrot sets for several values of $j$ are shown in Figure~\ref{fig:high_degree_multibrots}, illustrating the increasingly circular geometry and smoother boundary as the degree increases. For smaller values of $j$, the boundary exhibits a pronounced gear-like structure. As $j$ increases, the curvature becomes progressively more uniform, visually approaching that of the unit circle.\par

\vspace{1em}
\begin{center}
    \begin{minipage}{0.45\textwidth}
        \centering
        \includegraphics[width=0.75\textwidth,height=0.10\textheight,keepaspectratio]{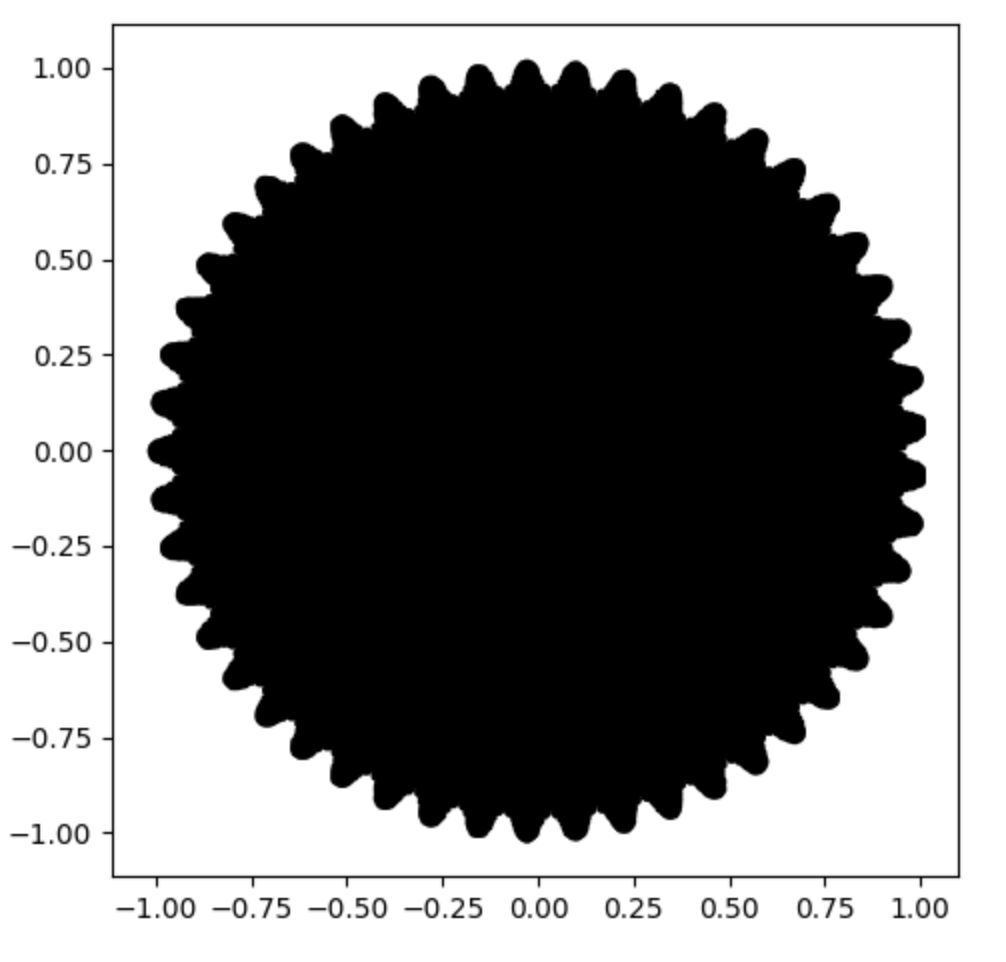}\\
        \vspace{0.2em}
        {\small $f(z)=z^{50}+c$}
    \end{minipage}%
    \hspace{0.05\textwidth}%
    \begin{minipage}{0.45\textwidth}
        \centering
        \includegraphics[width=0.75\textwidth,height=0.10\textheight,keepaspectratio]{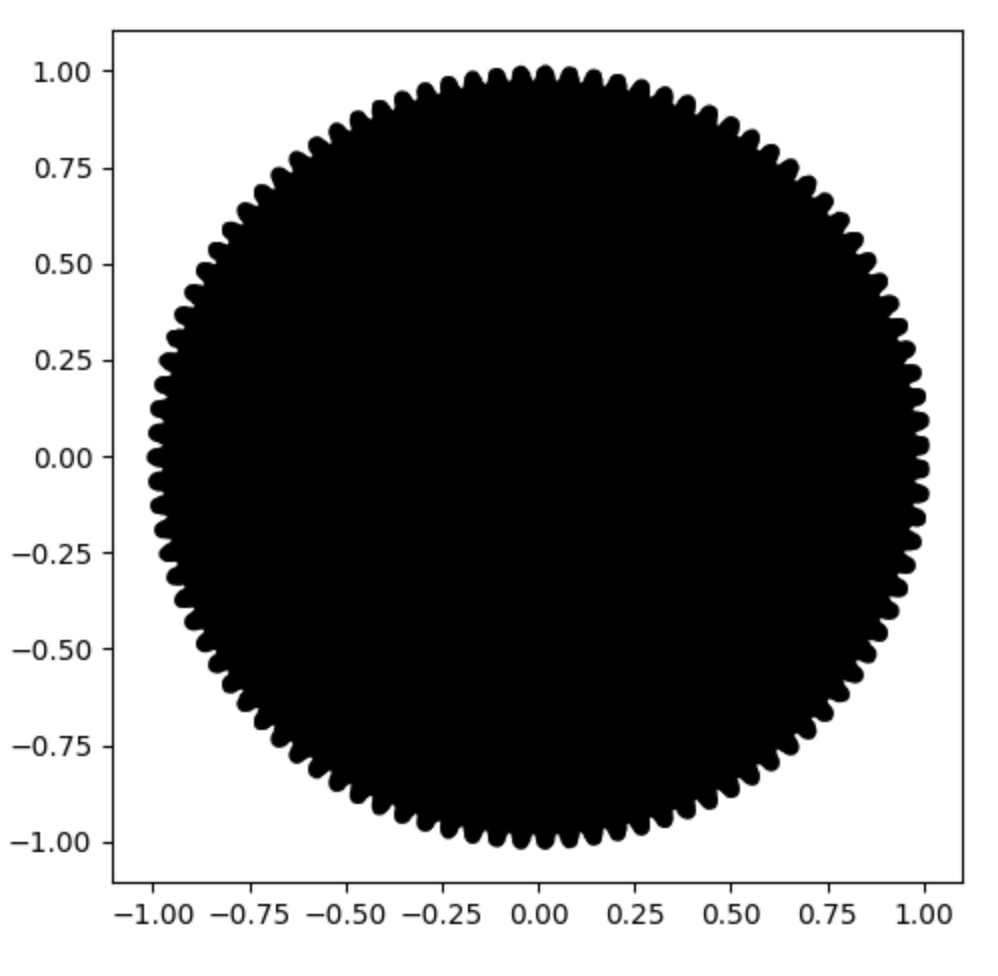}\\
        \vspace{0.2em}
        {\small $f(z)=z^{100}+c$}
    \end{minipage}

    \vspace{0.6em}

    \begin{minipage}{0.45\textwidth}
        \centering
        \includegraphics[width=0.75\textwidth,height=0.10\textheight,keepaspectratio]{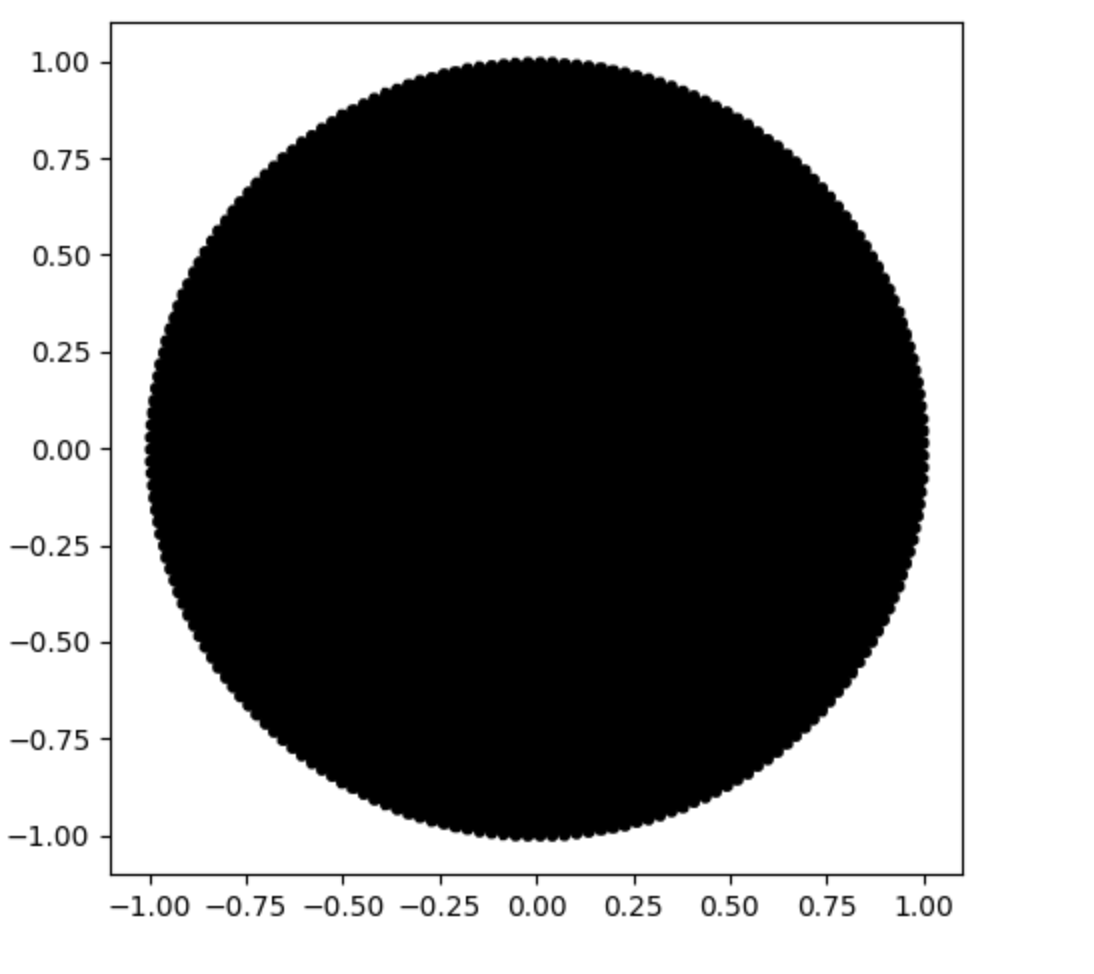}\\
        \vspace{0.2em}
        {\small $f(z)=z^{200}+c$}
    \end{minipage}%
    \hspace{0.05\textwidth}%
    \begin{minipage}{0.45\textwidth}
        \centering
        \includegraphics[width=0.75\textwidth,height=0.10\textheight,keepaspectratio]{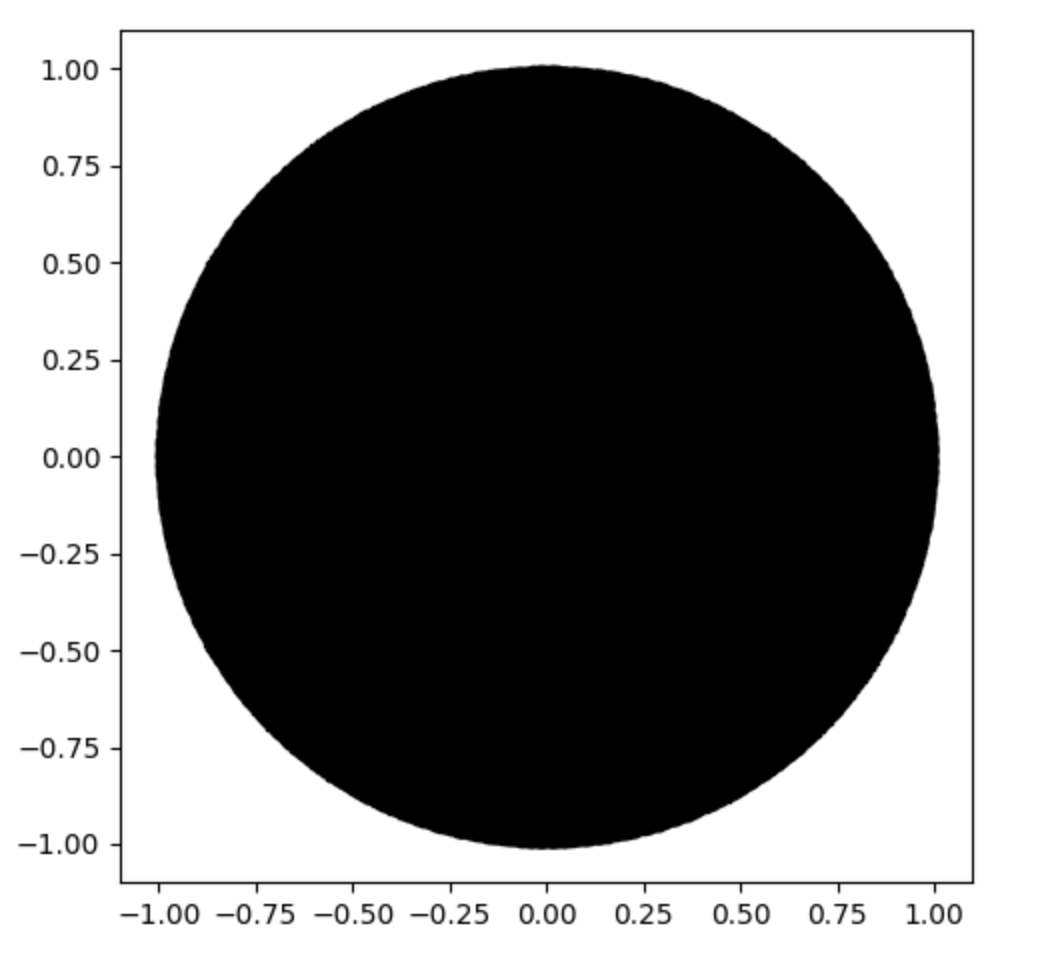}\\
        \vspace{0.2em}
        {\small $f(z)=z^{1000}+c$}
    \end{minipage}

    \vspace{0.8em}
    \captionof{figure}{Representative Multibrot sets for $f(z)=z^j+c$. As the degree increases, the sets become increasingly circular and their boundaries progressively smoother.}
    \label{fig:high_degree_multibrots}
\end{center}
\vspace{1em}

\subsection{Effective Radius: Numerical Observation}
\textit{As $j$ increases, the effective radius of the Multibrot set decreases.}\newline

The escape criterion established earlier shows that every Multibrot set is contained in a disk of radius $2$. However, the numerical experiments in Figure~\ref{fig:high_degree_multibrots} indicate that, for sufficiently large values of $j$, the sets become increasingly well approximated by the unit disk. Consequently, the effective radius appears to decrease toward $1$ as $j$ increases.

\section{Reference Algorithm}\label{app:implementation}

Algorithm~\ref{alg:multibrot_area} summarizes the escape-time pixel-counting
procedure used throughout the numerical experiments. The implementation used
for the reported computations evaluates grid rows in parallel using Numba,
but the underlying numerical procedure is independent of this implementation
choice.

\begin{algorithm}[htbp]
\caption{Escape-Time Pixel Counting for Multibrot Area Estimation}
\label{alg:multibrot_area}
\begin{algorithmic}[1]
\Require Degree $j\ge 2$, iteration limit $K$, sampling density $\rho$,
         rectangular domain $[x_{\min},x_{\max})\times[y_{\min},y_{\max})$
\Ensure Area estimate $\widehat{A}_{j,K,\rho}$

\State $W \gets \lfloor (x_{\max}-x_{\min})\rho \rfloor$
\State $H \gets \lfloor (y_{\max}-y_{\min})\rho \rfloor$
\State $\Delta x \gets (x_{\max}-x_{\min})/W$
\State $\Delta y \gets (y_{\max}-y_{\min})/H$
\State $N \gets 0$

\For{$n=0,\ldots,H-1$}
    \State $y \gets y_{\min}+n\Delta y$

    \For{$m=0,\ldots,W-1$}
        \State $x \gets x_{\min}+m\Delta x$
        \State $c \gets x+iy$
        \State $z \gets 0$
        \State $\textsc{escaped} \gets \textbf{false}$

        \For{$k=1,\ldots,K$}
            \State $z \gets z^j+c$

            \If{$|z|^2>4$}
                \State $\textsc{escaped} \gets \textbf{true}$
                \State \textbf{break}
            \EndIf
        \EndFor

        \If{\textbf{not} $\textsc{escaped}$}
            \State $N \gets N+1$
        \EndIf
    \EndFor
\EndFor

\State \Return $\widehat{A}_{j,K,\rho}
       \gets N\,\Delta x\,\Delta y$
\end{algorithmic}
\end{algorithm}
\captionof{figure}{Numba-accelerated Python implementation of the escape-time algorithm for generating generalized Mandelbrot sets. The script utilizes just-in-time compilation and parallelized execution to evaluate orbit divergence and compute membership within a defined bailout radius.}
\label{fig:escape}

\section{Additional Conformal Checks}\label{app:conformal_checks}
\begin{center}
\captionof{table}{First Laurent coefficients generated for the degree-$4$ Multibrot exterior map. The vanishing coefficients exhibit the expected sparsity pattern, while the leading nonzero coefficients provide direct checks of the generalized implementation.}
\label{tab:laurent_coefficients_j4}
\vspace{0.5em}
\begin{tabular}{c c @{\qquad} c c}
\toprule
Coefficient & Value & Coefficient & Value \\
\midrule
$b_{4,0}$  & $0$               & $b_{4,9}$  & $0$ \\
$b_{4,1}$  & $0$               & $b_{4,10}$ & $0$ \\
$b_{4,2}$  & $-\frac14$        & $b_{4,11}$ & $\frac{7}{2048}$ \\
$b_{4,3}$  & $0$               & $b_{4,12}$ & $0$ \\
$b_{4,4}$  & $0$               & $b_{4,13}$ & $0$ \\
$b_{4,5}$  & $-\frac1{32}$     & $b_{4,14}$ & $-\frac{31}{512}$ \\
$b_{4,6}$  & $0$               & $b_{4,15}$ & $0$ \\
$b_{4,7}$  & $0$               & $b_{4,16}$ & $0$ \\
$b_{4,8}$  & $0$               & $b_{4,17}$ & $-\frac{985}{65536}$ \\
\bottomrule
\end{tabular}
\end{center}

As a secondary qualitative check, we reconstructed the boundary contours of $\mathcal{M}_j$ directly from truncated Laurent series maps. Figure~\ref{fig:laurent_boundaries} shows the reconstructed exterior mappings for $j \in \{3, 4, 6\}$, faithfully reproducing the expected $(j-1)$-fold rotational symmetry.

\begin{figure}[htbp]
    \centering
    \begin{minipage}{0.32\textwidth}
        \centering
        \includegraphics[width=\textwidth]{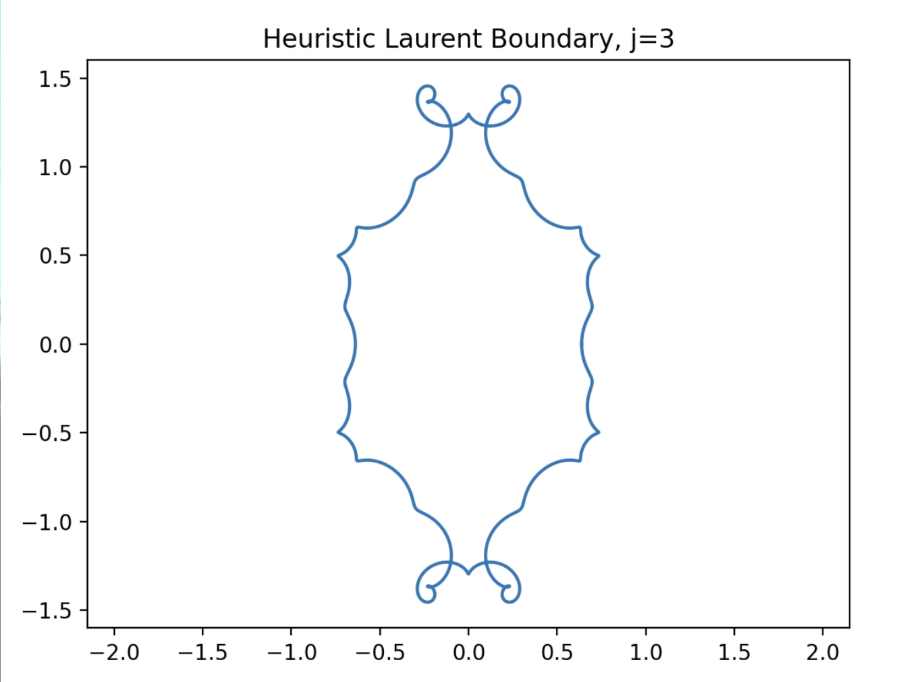}
        \caption*{$f(z) = z^3 + c$ \\ (2-gon boundary)}
    \end{minipage}\hfill
    \begin{minipage}{0.32\textwidth}
        \centering
        \includegraphics[width=\textwidth]{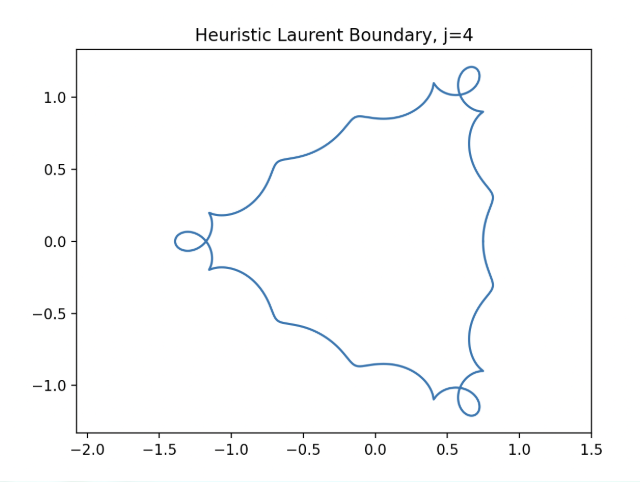}
        \caption*{$f(z) = z^4 + c$ \\ (3-gon boundary)}
    \end{minipage}\hfill
    \begin{minipage}{0.32\textwidth}
        \centering
        \includegraphics[width=\textwidth]{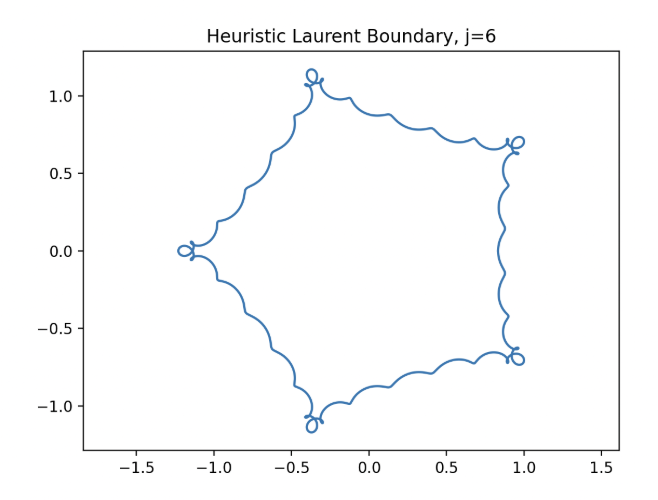}
        \caption*{$f(z) = z^6 + c$ \\ (5-gon boundary)}
    \end{minipage}
    \caption{Boundary approximations generated from truncated Laurent series. Reconstructions for degrees $j=3,4,$ and $6$ confirm that the generated coefficients accurately capture the geometric $(j-1)$-fold symmetry of the underlying Multibrot sets.}
    \label{fig:laurent_boundaries}
\end{figure}

\section{Supplementary Repository}
Additional development material, superseded experiments, and complete
symbolic derivations are archived at
\url{https://github.com/frostghost555/rohan_senapati-mandelbrot-appendix.git}.
The repository is supplementary; all claims needed for the main paper are
stated in the manuscript itself.

\bibliographystyle{plain}
\bibliography{references}

\end{document}